\def\namedlabel#1#2{\begingroup
 #2%
 \def\@currentlabel{#2}%
 \phantomsection\label{#1}\endgroup
}
\renewcommand{\PrintDOI}[1]{\href{http://dx.doi.org/\detokenize{#1}}{doi: \detokenize{#1}}%
	\IfEmptyBibField{pages}{, (to appear in print)}{}}
\theoremstyle{plain}
\newtheorem*{theorem*}{Theorem}
\newtheorem*{thmex*}{Theorem~\ref{example}}
\newtheorem*{thmasymp*}{Theorem~\ref{thmAsymp}}
\newtheorem{theorem}{Theorem}[section]
\newtheorem{corollary}[theorem]{Corollary}
\newtheorem{lemma}[theorem]{Lemma}
\newtheorem{remark}[theorem]{Remark}
\newtheorem{example}[theorem]{Example}
\theoremstyle{definition}
\newtheorem{definition}[theorem]{Definition}
\newcommand{\ben}{\begin{enumerate}}
\newcommand{\een}{\end{enumerate}}
\newcommand{\ed}{\end{document}}
\definecolor{rrr}{rgb}{.9,0,.1}
\definecolor{rr}{rgb}{.8,0,.3}
\newcommand{\tr}{\triangleright}
\renewcommand\hom{\operatorname{Hom}}
\newcommand\End{\operatorname{End}}
\title[Pointed Quandle Coloring Quivers]{Pointed Quandle Coloring Quivers of Linkoids}
\author[J. Ceniceros]{Jose Ceniceros}
\address{Hamilton College, Clinton, NY, USA}
\email{jcenicer@hamilton.edu}
\author[M. Klivans]{Max Klivans}
\address{Hamilton College, Clinton, NY, USA}
\email{mklivans@hamilton.edu}
\begin{document}

\maketitle

\begin{abstract}
We enhance the pointed quandle counting invariant of linkoids through the use of quivers analogously to quandle coloring quivers. This allows us to generalize the in-degree polynomial invariant of links to linkoids. Additionally, we introduce a new linkoid invariant, which we call the in-degree quiver polynomial matrix. Lastly, we study the pointed quandle coloring quivers of linkoids of $(p,2)$-torus type with respect to pointed dihedral quandles.   

\end{abstract}

\parbox{5.5in} {\textsc{Keywords:} quandles, pointed quandles, knotoids, linkoids, quivers

                \smallskip
                
                \textsc{2020 MSC:} 57K12}
%\tableofcontents

\section{Introduction}\label{intro}
Knotoids were introduced by Turaev in \cite{Tu}. Knotoids are knot diagrams with loose ends that may be on different regions of the diagram. Knotoids can be considered as generalizations of various knotted objects like tangles, where the endpoints can be positioned in any region of the tangle's complement, with some restrictions on the movement of the endpoints to the starting region. Additionally, knotoids are an extension of classical knot theory and thus have recently become the subject of much research. Specifically, several articles have been written exploring the properties of knotoids as well as defining invariants of knotoids, see \cites{BBA, MV, GG, GK1, GK2}. Besides the interest from low-dimensional topologists, researchers from the area of biology and chemistry have also become interested in knotoids as they provide a formal mathematical foundation for studying the entanglement of proteins, see \cite{DDFS, GGLDSL, BG, GDS}.

The quandle, which is an algebraic structure, was introduced by Joyce \cite{Joyce} and Matveev \cite{Matveev} independently. Since then, quandles have attracted attention from topologists. The interest from topologists comes from the fact that the quandle axioms capture the Reidemeister moves from knot theory. Furthermore, Joyce and Matveev independently proved that the fundamental quandle is a complete invariant of knots and links up to mirror image and orientation reversal. Although the fundamental quandle is a powerful invariant, in a way, this is trading a difficult knot theory question for a difficult algebra question. The main difficulty when considering the fundamental quandle of a knot is that comparing two presentations of two fundamental quandles is a significantly difficult problem. Therefore, much work has been done to define computable invariants derived from the fundamental quandle. Specifically, the quandle counting invariant and enhancements to the quandle counting invariant. See \cite{Joyce, CJKLS}.

A specific enhancement of the quandle counting invariant was defined by choosing a subset of the set of endomorphism of a quandle and defining a quiver-valued invariant of classical knots and links, this invariant is the quandle coloring quiver \cite{CN}. Since then several papers have been written investigating the properties and generalizations of this enhancement, see \cite{BaCa, CN1, EJL, CCN }.

In \cite{GuPf, Pf}, G\"ug\"umc\"u and Pflume introduced the fundamental quandle of linkoids, and some of the basic properties were studied. They were able to prove that the fundamental quandle is an invariant of linkoids, but it was also invariant under the so-called forbidden moves. If allowed, the forbidden moves would allow the movement of endpoints under or over arcs and thus may transform the linkoid into a non-equivalent linkoid. G\"ug\"umc\"u and Pflume addressed this issue by defining a generalization of quandles called $n$-pointed quandles. Pointed quandles were used to define the fundamental pointed quandle of linkoids, as well as the pointed quandle coloring invariant and quandle coloring matrix. Furthermore, they were able to show the effectiveness of their new invariant by providing examples of knotoids that are not distinguished using quandles but are distinguished by pointed quandles.

In this article, we generalize the quandle coloring quiver to the case of pointed quandles to obtain the pointed quandle coloring quiver. We introduce two invariants of linkoids derived from the pointed quandle coloring quiver: one in the form of a polynomial and the other in the form of a matrix. We demonstrate the strength of these new invariants for linkoids by providing examples of linkoids that are not distinguished by either the pointed quandle counting invariant or the quandle matrix invariant but are distinguished by the new invariants. Furthermore, we study the pointed quandle quiver of a family of linkoids.

This article is organized as follows. In Section~\ref{linkoids}, we will go over the basics of knotoids and linkoids, including their Reidemeister moves and the forbidden moves. In Section~\ref{quandles}, we will review the definition of a quandle, discuss its shortcomings when studying linkoids, and recall pointed quandles. In Section~\ref{pointedquiver}, we will introduce a new invariant of linkoids called the pointed quandle coloring quiver. In Section~\ref{indegree}, we define the in-degree polynomial invariant as well as the in-degree polynomial matrix invariant of linkoids. Additionally, we provide examples to demonstrate that the in-degree polynomial and the in-degree polynomial matrix are enhancements of the quandle counting invariant and the quandle counting matrix invariant. Lastly, in Section~\ref{torus-type}, we study the pointed quandle coloring quivers of linkoids of $\mathcal{T}(p,2)$-type.

\section{Linkoids}\label{linkoids}

In this section, we give a brief overview of the basic definitions and results of knotoids and linkoids. For further details, please refer to \cite{Tu,GK1, GK2,GuPf, Pf}.

\begin{definition}\label{linkoid}\cite{GuPf, Pf}
    For any $n\ge 0$, an oriented \emph{$n$-linkoid diagram} in $S^2$ is a generic immersion of $n$ unit intervals $[0, 1]$ and a number of oriented unit circles $S^1$ into $S^2$ with finitely many transverse double points. Each double point is endowed with over/under-crossing data. A component is \emph{open} if it is the image of $[0,1]$ and a component is \emph{closed} if it is the image of $S^1$. In an open component, the image of $0$ is called the \emph{leg} and the image of $1$ is called the \emph{head} of the component.

    In particular, a $1$-linkoid diagram with no closed components is called a \emph{knotoid} diagram, a $0$-linkoid diagram with exactly one closed component is a \emph{knot diagram}, and a $0$-linkoid diagram with at least one closed component is a \emph{link diagram}. A \emph{full linkoid diagram} is an $n$-linkoid diagram with no closed components.
\end{definition}

We note that, in general, linkoids can be defined as generic immersions into any orientable surface. Specifically, in the case when the surface is $\mathbb{R}^2$, the linkoids are called \emph{planar linkoids}. In this article, we will focus on linkoids in $S^2$, which are called \emph{spherical linkoids}. Since we focus on spherical linkoids, we will refer to them as linkoids.

\begin{figure}[h!]
    \centering
\includegraphics{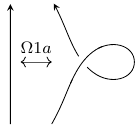}\hspace{2cm}
\includegraphics{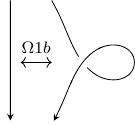}\\
\includegraphics{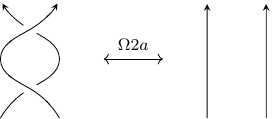}\\
\includegraphics{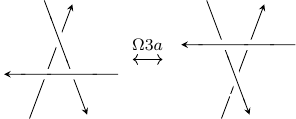}
\caption{A minimal generating set of oriented Reidemeister moves.}
\label{fig:rmoves}
\end{figure}

\begin{definition}
    Two linkoid diagrams $L$, $L'$ in $S^2$ are equivalent if one can be moved into the other by a finite sequence of local moves known as the oriented Reidemeister moves depicted in Figure~\ref{fig:rmoves} and isotopy of $S^2$. We denote the equivalence of linkoid diagrams by $L \sim L'$. The equivalence classes of these diagrams are called \emph{linkoids}.
\end{definition}

We note that in \cite{P}, the four moves in Figure~\ref{fig:rmoves} were shown to form a minimal generating set of oriented Reidemeister moves. Additionally, linkoid isotopy in $S^2$ involves an additional move that allows pulling an arc so that it crosses either of the ``poles'' of $S^2$ as described in \cite{GL}. This is represented by the S-move in Figure~\ref{fig:smove}.  Lastly, the Reidemeister moves applied to linkoid diagrams are not allowed to involve any endpoints of the linkoid. More specifically, endpoints are not allowed to be moved over or under a strand. Therefore, the moves in Figure~\ref{fig:forbidden} are not allowed. These moves are called the \emph{forbidden moves}. If the forbidden moves were allowed, then any linkoid would be equivalent to the trivial linkoid.

\begin{figure}[h!]
    \centering
\includegraphics{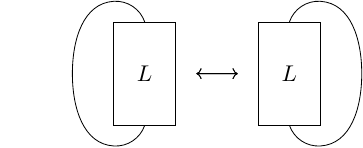}
\caption{The S-move.}
\label{fig:smove}
\end{figure}

\begin{figure}[h!]
    \centering
\includegraphics{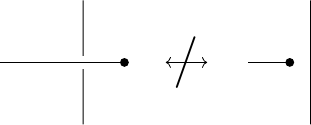}\hspace{1cm}
\includegraphics{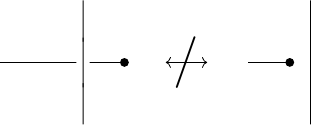}
\caption{The over and under forbidden moves.}
\label{fig:forbidden}
\end{figure}

The following definition will be important in Section~\ref{torus-type}.

\begin{definition}
    A $1$-linkoid diagram is called a \emph{link-type} if the endpoints of the open component lie in the same region of the diagram. Specifically, if we have a knotoid diagram with two end points lying in the same region, it is called a \emph{knot-type} knotoid.
\end{definition}

The diagram shown in Figure~\ref{fig:T(4,2)} depicts the 1-linkoid of $\mathcal{T}(4,2)$-type. We observe that the endpoints labeled as $x_1$ and $x_5$ are in the same region of the diagram. Additionally, if these two endpoints were connected, the resulting diagram would represent the $(4,2)$-torus link, which is commonly denoted by $\mathcal{T}(4,2)$.

\begin{figure}[h]
    \centering
\includegraphics{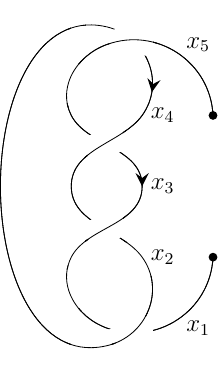}
    \caption{ A 1-linkoid diagram of $\mathcal{T}(4,2)$-type}
    \label{fig:T(4,2)}
\end{figure}

\section{Quandles and Pointed Quandles}\label{quandles}

In this section, we review the basics of quandles. More detailed information on this topic can be found in \cite{EN,Joyce, Matveev}. We will also review pointed quandles. For more details on this topic, refer to \cite{GuPf, Pf}.
\begin{definition}\label{quandle}
    A \emph{quandle} is an ordered pair $(X, \tr)$, where $X$ is a set and $\tr:X\times X\to X$ is a binary operation on $X$ such that
    \begin{enumerate}
        \item for all $x\in X$, $x\tr x = x$,
        \item for all $y\in Y$, the function $\beta_y:X\to X$, defined by $\beta_y(x)=x\tr y$, is a bijection, and 
        \item for all $x,y,z\in X$, $(x\tr y)\tr z = (x\tr z) \tr (y\tr z)$.
    \end{enumerate}
    We will denote the quandle $(X,\tr)$ by $X$ if the operation $\tr$ is clear.
\end{definition}

\begin{figure}[h!]
    \centering
\includegraphics{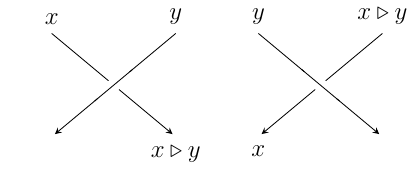}
    \caption{Quandle relations at a positive and negative crossing.}
    \label{fig:rule}
\end{figure}

The axioms of a quandle correspond respectively to the Reidemeister moves of types I, II, and III following relations in Figure~\ref{fig:rule}. To see more details about the relationship between quandles and the Reidemeister moves, see \cite{EN}. The following are typical examples of quandles: 

\begin{example}\label{trivial}
    For any set $X$, the \emph{trivial quandle} is the quandle $(X, \tr)$, where, for each $x,y\in X$, $x\tr y= x$.
\end{example}

\begin{example}\label{def:dihedral}
    The set of integers modulo $n$, denoted by $\mathbb{Z}_n$, is a quandle with the operation $x \tr y = 2y - x$. The quandle $(\mathbb{Z}_n,\tr)$ is called the dihedral quandle of order $n$.
\end{example}

In order to simplify the notation, we will use $\mathbb{Z}_n$ to denote the dihedral quandle of order $n$. Otherwise, we will specify that we refer to the set of integers modulo $n$.

%We can express a quandle on a finite set $X = \{x_1, x_2, \dots, x_n\}$ with an $n \times n $ matrix that encodes the operation table of $\tr$.
%\begin{example}
% For example, the dihedral quandle of order $4$, denoted by $\mathbb{Z}_4$, is defined by the following operation matrix
%    \[ \begin{bmatrix}
%0 & 2 & 0 & 2\\
%3 & 1 & 3 & 1\\
%2 & 0 & 2 & 0\\
%1 & 3 & 1 & 3
%\end{bmatrix}. \]
%\end{example}

In \cite{Joyce, Matveev}, the quandle was used to define the following quandle of a link. For every oriented link $L$ with diagram $D$, there is an associated fundamental quandle of $L$ denoted by $Q(L)$. The fundamental quandle of $L$ is defined as the free quandle on the set of arcs in the diagram $D$ modulo the equivalence relations generated by the crossing relations from the diagram $D$ of $L$. In \cite{GuPf,Pf}, the fundamental quandle was extended to linkoids in the following way. 

\begin{definition}\label{fundquandle}
    Let $L$ be an oriented linkoid diagram and $A(L)$ be the set of arcs in $L$. Then the \emph{fundamental quandle} of $L$ is 
    $$Q(L)= \langle x\in A(L)\mid r_\tau\text{ for all crossings $\tau$ in $L$}\rangle,$$
    the quandle generated by the arcs of $L$ modulo the relations given by each crossing as in Figure~\ref{fig:rule}.
\end{definition}

The fundamental quandle was independently introduced by Joyce in \cite{Joyce} and Matveev in \cite{Matveev}. Also, both Joyce and Matveev proved that the fundamental quandle can distinguish all oriented knots up to mirror image with reverse orientation. It was shown in \cite{GuPf,Pf} that the fundamental quandle is an invariant of linkoids. This means that the fundamental quandle only depends on the linkoid and not the choice of linkoid diagram.
On the other hand, by Lemma 3.4 in \cite{GuPf,Pf}, the fundamental quandle of a linkoid is invariant under the forbidden moves, so the fundamental quandle alone is much less helpful in the classification of linkoids. This motivated the following definitions.

\begin{definition}\cite{GuPf,Pf}
    An \emph{$n$-pointed quandle} $(X,x_1,\dots,x_n)$ is an ordered tuple consisting of a quandle $X$ and $n$ elements $x_1,\dots,x_n\in X$, the \emph{basepoints} of the pointed quandle. 
\end{definition}

\begin{definition} \cite{GuPf, Pf}An \emph{ordered $n$-linkoid diagram} is an $n$-linkoid diagram where the open components are enumerated.
\end{definition}

\begin{definition}\cite{GuPf, Pf}
    Let $L$ be an ordered $n$-linkoid diagram. The \emph{fundamental pointed quandle of $L$} is the $2n$-pointed quandle
    $$P(L) = (Q(L),l_1,h_1,\dots,l_n,h_n),$$
    where $Q(L)$ is the fundamental quandle of $L$ and, for each $i\in\{1,\dots,n\}$, $l_i$ and $h_i$ are the labels corresponding to the leg and head of the $i$-th open component of $L$.
\end{definition}

In \cite{GuPf, Pf}, G\"ug\"umc\"u and Pflume were able to show that the fundamental pointed quandle of $L$ is invariant under Reidemeister moves and the S-move. Thus, the fundamental pointed quandle is an invariant of linkoids and only depends on the linkoid and not on the choice of linkoid diagram. 

In order to define computable invariants derived from fundamental pointed quandles of linkoids, it is useful to consider homomorphisms of pointed quandles. First recall the definition of quandle homomorphism.  Let $(X,\tr_X)$ and $(Y,\tr_Y)$ be quandles and let $f:X\to Y$ be a function. Then $f$ is a \emph{quandle homomorphism} if $f(x_1\tr_Xx_2)=f(x_1)\tr_Yf(x_2)$ for all $x_1,x_2\in X$. We denote the set of all such functions $\hom(X,Y)$. The following was introduced in \cite{GuPf, Pf}, but we reformulate the definition here. 

\begin{definition}
    Let $\mathcal X=(X,x_1,\dots,x_n)$ and $\mathcal Y=(Y, y_1,\dots,y_n)$ be two $n$-pointed quandles and let $f:X\to Y$ be a quandle homomorphism. Then $f$ is a \emph{pointed quandle homomorphism} if $f(x_i)=y_i$ for all $i\in\{1,\dots,n\}$.
\end{definition}

    In what follows, we will use both unpointed quandles and pointed quandles. We will use $X$ to denote unpointed quandles and $\mathcal{X}$ to denote pointed quandles. 
    We denote the set of all such pointed homomorphisms from $\mathcal{X}$ to $\mathcal{Y}$ by $\hom(\mathcal X,\mathcal Y)$. The pointed quandle homomorphism set is differentiated from the set of all quandle homomorphisms by the presence of basepoints. Note that $\hom(\mathcal X,\mathcal Y)\subseteq \hom(X,Y)$. Lastly, we will use $\End(\mathcal{X})$ to denote the set of pointed quandle endomorphisms of $\mathcal{X}$.

\begin{definition}

    Let $P(L)$ be the fundamental pointed quandle of an $n$-linkoid $L$ and let $\mathcal X=(X,x_1,x_2,\dots,x_{2n})$ be a finite $2n$-pointed quandle. The \emph{pointed quandle counting invariant} of $L$ with respect to $\mathcal X$ is \[\Phi_{\mathcal X}^{\mathbb Z}(L)=|\hom(P(L),\mathcal X)|.\]
    \end{definition}

\begin{definition}
    
    If $L$ is a $1$-linkoid, then the \emph{quandle counting matrix} $\Phi_X^{M_k}(L)$ of $L$ with respect to the finite unpointed quandle $X=\{x_1,x_2,\dots,x_k\}$ is the $k\times k$ matrix whose $(i,j)$-th entry is
    $$\left(\Phi_X^{M_k}(L)\right)_{i,j}=|\hom(P(L),(X,x_i,x_j))|.$$
\end{definition}

\begin{remark}
       We will refer to $\hom(P(L),\mathcal{X})$ as the coloring set of the $n$-linkoid $L$ by $\mathcal{X}$. Furthermore, each $\alpha \in \hom(P(L),\mathcal{X})$ is an $\mathcal{X}$-coloring of $L$. 
\end{remark}

In \cite{GuPf, Pf}, it is established that the pointed quandle counting invariant and the quandle counting matrix are invariants of linkoids. Also, see \cite{GuPf, Pf} for examples of the pointed quandle counting invariant and the quandle counting matrix.

\section{Pointed Quandle Coloring Quivers} \label{pointedquiver}

\begin{definition}\label{def:multigraph}
    A \emph{directed multigraph} is an ordered pair $G=(V,w)$, where $V$ is any finite set, and $w:V\times V\to \mathbb N\cup \{0\}$ is a function. The elements of $V$ are called \emph{vertices}, and for all $u,v\in V$, if $w(u,v)=n$, then there are $n$ \emph{arcs} from $u$ to $v$ in $G$. 
    If there are multiple graphs in question, then we denote $V=V(G)$ and $w=w_G$.
\end{definition}

Directed multigraphs lend themselves naturally to drawings in the plane. Represent each vertex with a dot, and, for each $u,v\in V$, draw $w(u,v)$ arrows from the dot representing $u$ to the dot representing $v$.

\begin{definition}
    Let $P(L)$ be the fundamental pointed quandle of an $n$-linkoid diagram $L$, let $\mathcal X$ be a finite $2n$-pointed quandle, and let $S\subset \End(\mathcal X)$. The \emph{pointed quandle coloring quiver of $L$ with respect to $\mathcal{X}$} is the directed multigraph $\mathcal Q_{\mathcal X}^S(L)=(V,w)$, where $V=\hom(P(L),\mathcal X)$, and, for all $\alpha,\beta\in V$, $w(\alpha,\beta)=|\{\varphi\in\End(\mathcal X)\mid \varphi\circ\alpha=\beta\}|$. In the case when $S = \End{\mathcal{X}}$ we will call this
    the \emph{full pointed quandle coloring quiver of $L$ with respect to $\mathcal{X}$} and will be denoted by $\mathcal Q_{\mathcal{X}}(L)$.
\end{definition}

That is, each vertex in $\mathcal Q_{\mathcal X}^S(L)$ is an $\mathcal X$-coloring of $L$ and, for all $\alpha,\beta\in V(\mathcal Q_{\mathcal X}^S(L))$, there is an arc from $\alpha$ to $\beta$ for each $\varphi\in S$ such that $\varphi\circ \alpha=\beta$. The coloring set of a linkoid by a pointed quandle may be empty. This is unlike the case of classical knots and links, where the trivial coloring is always valid with respect to any quandle. However, this is not always possible for linkoids and pointed quandles. See Example~\ref{ex:effective} for examples of linkoids with no valid colorings by several pointed quandles. In such a situation, the associated quiver has no vertices and no edges.

\begin{theorem}
    Let $\mathcal{X}$ be a finite $2n$-pointed quandle, $S\subseteq \End(\mathcal{X})$ and $L$ is a oriented $n$-linkoid. Then the quiver $\mathcal{Q}_\mathcal{X}^S(L)$ is an invariant of $L$.
\end{theorem}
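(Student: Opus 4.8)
The plan is to show that equivalent linkoid diagrams produce isomorphic quivers. Since the quiver $\mathcal{Q}_\mathcal{X}^S(L)$ is constructed entirely from the fundamental pointed quandle $P(L)$ together with the fixed data $\mathcal{X}$ and $S$, the key observation is that the construction factors through $P(L)$: once we know $P(L)$ up to pointed quandle isomorphism, the vertex set $V=\hom(P(L),\mathcal{X})$ and the edge weights $w(\alpha,\beta)=|\{\varphi\in S\mid \varphi\circ\alpha=\beta\}|$ are completely determined. Therefore the theorem will follow from two facts: first, that $P(L)$ is an invariant of the linkoid (which is established earlier in the excerpt, citing \cite{GuPf}); and second, that isomorphic fundamental pointed quandles yield isomorphic quivers.

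First I would make precise the notion of quiver isomorphism: an isomorphism of directed multigraphs $G=(V,w)$ and $G'=(V',w')$ is a bijection $\psi:V\to V'$ such that $w(u,v)=w'(\psi(u),\psi(v))$ for all $u,v\in V$. Then I would reduce the theorem to the following claim: if $L\sim L'$, then $\mathcal{Q}_\mathcal{X}^S(L)\cong \mathcal{Q}_\mathcal{X}^S(L')$ as directed multigraphs. By invariance of the fundamental pointed quandle under the Reidemeister moves and the S-move, there is a pointed quandle isomorphism $g:P(L)\to P(L')$. I would then build the candidate quiver isomorphism from $g$ by precomposition.

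Next I would carry out the main construction. Given $g:P(L)\to P(L')$ a pointed quandle isomorphism, define $\psi:\hom(P(L'),\mathcal{X})\to\hom(P(L),\mathcal{X})$ by $\psi(\beta)=\beta\circ g$. Since $g$ is a pointed quandle isomorphism and composition of pointed quandle homomorphisms is again a pointed quandle homomorphism, $\psi$ is well-defined, and its inverse is $\beta'\mapsto \beta'\circ g^{-1}$, so $\psi$ is a bijection on vertex sets. The core of the argument is to verify that $\psi$ preserves edge weights. For colorings $\alpha,\beta\in\hom(P(L'),\mathcal{X})$, I must show
\[
|\{\varphi\in S\mid \varphi\circ\alpha=\beta\}| = |\{\varphi\in S\mid \varphi\circ(\alpha\circ g)=\beta\circ g\}|.
\]
The point is that $\varphi\circ\alpha=\beta$ holds if and only if $\varphi\circ\alpha\circ g=\beta\circ g$, because $g$ is a bijection (so it can be cancelled on the right). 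Crucially, the endomorphism $\varphi$ and the subset $S\subseteq\End(\mathcal{X})$ live entirely on the fixed target $\mathcal{X}$ and are untouched by $g$; thus the very same set of $\varphi$'s realizes both edges, giving equality of the two counts, not merely a bijection between two different families of endomorphisms.

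The step I expect to require the most care is precisely this edge-weight verification, specifically confirming that right-cancellation by $g$ is valid and that the indexing set $S$ does not need to be altered. The subtlety is conceptual rather than computational: one must notice that the arcs of the quiver are labeled by endomorphisms of $\mathcal{X}$ (which does not change when we change the diagram of $L$), so the bijection $\psi$ reindexes only the vertices while keeping the arc-labels fixed, and the condition $\varphi\circ\alpha=\beta$ transforms cleanly under precomposition by the fixed isomorphism $g$. Once this is checked, combining the vertex bijection with the weight-preservation property shows $\mathcal{Q}_\mathcal{X}^S(L')\cong\mathcal{Q}_\mathcal{X}^S(L)$, completing the proof.
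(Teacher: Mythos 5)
Your proposal is correct and follows essentially the same route as the paper: both reduce the claim to the already-established invariance of the fundamental pointed quandle $P(L)$, observing that the quiver is determined up to isomorphism by $\mathcal{X}$, $S$, and $\hom(P(L),\mathcal{X})$. You simply spell out the details the paper leaves implicit, namely the explicit vertex bijection $\beta\mapsto\beta\circ g$ induced by a pointed quandle isomorphism $g:P(L)\to P(L')$ and the right-cancellation argument showing edge weights are preserved.
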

\begin{proof}
For every $\mathcal{X}$-coloring $\alpha \in \textup{Hom}(P(L), \mathcal{X})$ and $\varphi: \mathcal{X} \rightarrow \mathcal{X}$, the conditions needed for $\beta = \varphi \circ \alpha$ to be an $\mathcal{X}$-coloring of $L$ are exactly the conditions needed for $\varphi$ to be a pointed quandle endomorphism. Since the pointed quiver $\mathcal{Q}_{\mathcal{X}}^S(L)$ is determined up to isomorphism by $\mathcal{X}$ and $\textup{Hom}(P(L), \mathcal{X})$, the pointed quiver is an invariant of $L$.
\end{proof}

\begin{corollary}\label{invariant}
    Any invariant of directed multigraphs applied to $\mathcal{Q}_\mathcal{X}^S(L)$ defines an invariant of oriented $n$-linkoids.
\end{corollary}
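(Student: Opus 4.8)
The plan is to reduce the statement to a one-line composition of two invariance properties, so the bulk of the work is simply pinning down what each notion of ``invariant'' means. First I would make explicit the definition being used on the source side: an \emph{invariant of directed multigraphs} is a function $\Psi$ defined on directed multigraphs that is constant on isomorphism classes, i.e.\ whenever $G\cong G'$ one has $\Psi(G)=\Psi(G')$. On the target side, an \emph{invariant of oriented $n$-linkoids} is an assignment $L\mapsto f(L)$ that is constant on equivalence classes of linkoid diagrams, i.e.\ $f(L)=f(L')$ whenever $L\sim L'$ (equivalence being generated by the oriented Reidemeister moves of Figure~\ref{fig:rmoves} together with the S-move of Figure~\ref{fig:smove}). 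The claim to establish is then that $L\mapsto \Psi\!\left(\mathcal Q_{\mathcal X}^S(L)\right)$ is constant on such equivalence classes.

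The key step is to invoke the preceding theorem, which asserts that $\mathcal Q_{\mathcal X}^S(L)$ is an invariant of $L$. Unwinding this statement in the present language, it says precisely that if $L\sim L'$ are equivalent oriented $n$-linkoid diagrams, then the directed multigraphs $\mathcal Q_{\mathcal X}^S(L)$ and $\mathcal Q_{\mathcal X}^S(L')$ are isomorphic. With this in hand the corollary follows immediately: given any multigraph invariant $\Psi$ and any pair $L\sim L'$, the theorem yields $\mathcal Q_{\mathcal X}^S(L)\cong \mathcal Q_{\mathcal X}^S(L')$, and then isomorphism-invariance of $\Psi$ gives
$$\Psi\!\left(\mathcal Q_{\mathcal X}^S(L)\right)=\Psi\!\left(\mathcal Q_{\mathcal X}^S(L')\right).$$
Thus the composite assignment takes the same value on any two equivalent diagrams, which is exactly the definition of a linkoid invariant.

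I do not expect any genuine obstacle here; the corollary is a formal consequence of composing two functions that each respect the appropriate equivalence relation, in the same spirit as the corresponding statement for classical quandle coloring quivers in \cite{CN}. The only point that warrants care is bookkeeping at the level of definitions, namely ensuring that ``invariant'' is read as ``isomorphism-invariant'' for multigraphs and as ``invariant under the generating Reidemeister moves and the S-move'' for linkoids, so that the isomorphism produced by the theorem is exactly the hypothesis consumed by $\Psi$. Once both notions are phrased as constancy on the relevant equivalence classes, no further computation is required.
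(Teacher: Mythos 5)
Your proof is correct and matches the paper's intent exactly: the corollary is stated as an immediate consequence of the preceding theorem (that $\mathcal{Q}_\mathcal{X}^S(L)$ is determined up to isomorphism by the linkoid), and your composition of quiver-invariance with isomorphism-invariance of $\Psi$ is precisely that argument. The paper gives no separate proof, so there is nothing further to reconcile.
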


\section{The In-degree Quiver Polynomial and Matrix}\label{indegree}

In their work \cite{CN}, Cho and Nelson discussed how the number of edges out of a vertex in a quandle coloring quiver corresponds to the cardinality of $S \subseteq \textup{End}(X)$.  They also remarked that the number of edges into a vertex $v$, called the \emph{in-degree} of the vertex and denoted by $\text{deg}^+(v)$, may be different. Cho and Nelson used this to define the in-degree quiver polynomial of a link. The following definition generalizes this polynomial invariant to $n$-linkoids. Additionally, we introduce the in-degree quiver polynomial matrix of 1-linkoids. 

\begin{definition}
Let $\mathcal{X}$ be a finite $2n$-pointed quandle, $S \subset \textup{End}(\mathcal{X})$ a set of $2n$-pointed quandle endomorphisms, $L$ an oriented $n$-linkoid and $\mathcal{Q}_\mathcal{X}^S(L)$ the associated pointed quandle coloring quiver of $L$ whose set of vertices is $V = \textup{Hom}(P(L), \mathcal{X})$. Then the \emph{in-degree quiver polynomial of $L$} with respect to $\mathcal{X}$ is 

\[\Phi_{\mathcal{X}}^{\textup{deg}^+, S}(L) = \sum_{f \in V} u^{deg^+(f)}.\]
In the case when $S = \End{\mathcal{X}}$ we will call this the \emph{full in-degree quiver polynomial of $L$ with respect to $\mathcal{X}$} and will be denoted by $\Phi_{\mathcal{X}}^{\textup{deg}^+}(L)$.

\end{definition}

\begin{definition}
Let $X= \{x_1, x_2, \dots, x_k\}$ be a finite quandle, so, for each $i,j\in\{1,\dots,k\}$, $(X,x_i,x_j)$ is a $2$-pointed quandle and $S_{i,j}\subset \End{(X,x_i,x_j)}$. Let $\mathbf{S}=\{S_{i,j}\mid i,j\in\{1,\dots,k\}\}$. 
Let $L$ be an oriented $1$- linkoid. Then the \emph{in-degree quiver polynomial matrix $\Phi_{X,\mathbf{S}}^{M_k,\textup{deg}^+}(L)$ of $L$ with respect to $X$} is the $k \times k$ matrix whose $(i,j)$-th entry is
\[\left(\Phi_{X,\mathbf{S}}^{M_k,\textup{deg}^+}(L) \right)_{i,j} = \Phi_{(X,x_i,x_j)}^{\textup{deg}^+, S_{i,j}}(L).
\]
If $S_{i,j} = \End{(X,x_i,x_j)}$ for all $i,j\in\{1,\dots,k\}$, then we will call this the \emph{full in-degree quiver polynomial matrix of $L$ with respect to $X$} and will denote it by $\Phi_{X}^{M_k,\textup{deg}^+}(L)$.

\end{definition}

By construction and Corollary~\ref{invariant}, we obtain the following two results.

\begin{corollary}
    The in-degree quiver polynomials are invariants of $n$-linkoids in the case of $2n$-pointed quandles. 
\end{corollary}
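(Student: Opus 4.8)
The plan is to deduce this corollary as a direct consequence of Corollary~\ref{invariant}, which already guarantees that any invariant of directed multigraphs applied to $\mathcal{Q}_\mathcal{X}^S(L)$ yields an invariant of oriented $n$-linkoids. The essential observation is that the in-degree quiver polynomial $\Phi_{\mathcal{X}}^{\textup{deg}^+, S}(L)$ is precisely such a graph invariant: it is computed solely from the multigraph structure of $\mathcal{Q}_\mathcal{X}^S(L)$, namely from the in-degrees of its vertices.

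First I would recall that $\Phi_{\mathcal{X}}^{\textup{deg}^+, S}(L) = \sum_{f \in V} u^{\deg^+(f)}$, where $V = \textup{Hom}(P(L), \mathcal{X})$ is the vertex set of the quiver and $\deg^+(f)$ is the in-degree of the vertex $f$. I would then argue that the multiset of in-degrees $\{\deg^+(f) : f \in V\}$ is invariant under isomorphism of directed multigraphs: any graph isomorphism $\psi \colon \mathcal{Q}_\mathcal{X}^S(L) \to \mathcal{Q}_\mathcal{X}^S(L')$ is a bijection on vertices preserving arc multiplicities, hence $\deg^+(\psi(f)) = \deg^+(f)$ for every vertex $f$. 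Consequently the generating polynomial $\sum_{f} u^{\deg^+(f)}$, which depends only on this multiset of in-degrees, is unchanged under such an isomorphism, so it is a genuine invariant of directed multigraphs.

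Next I would combine this with the fact, established in the preceding theorem, that $\mathcal{Q}_\mathcal{X}^S(L)$ is itself an invariant of the $n$-linkoid $L$ up to isomorphism of directed multigraphs. Thus, if $L \sim L'$, then $\mathcal{Q}_\mathcal{X}^S(L) \cong \mathcal{Q}_\mathcal{X}^S(L')$, and applying the graph invariant above gives $\Phi_{\mathcal{X}}^{\textup{deg}^+, S}(L) = \Phi_{\mathcal{X}}^{\textup{deg}^+, S}(L')$. Invoking Corollary~\ref{invariant} makes this formal: the assignment $G \mapsto \sum_{f \in V(G)} u^{\deg^+(f)}$ is an invariant of directed multigraphs, so its composition with $L \mapsto \mathcal{Q}_\mathcal{X}^S(L)$ is an invariant of oriented $n$-linkoids, which is exactly the claim for $2n$-pointed quandles.

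I do not expect a genuine obstacle here, since the statement is essentially a formality once the quiver invariance is in hand; the only point requiring care is the verification that the in-degree multiset is preserved under multigraph isomorphism, which is where the real (though elementary) content lies. The proof is therefore short, and the main task is simply to phrase the reduction to Corollary~\ref{invariant} cleanly rather than to overcome any technical difficulty.
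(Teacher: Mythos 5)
Your argument is correct and matches the paper's approach exactly: the paper derives this corollary "by construction and Corollary~\ref{invariant}," i.e., by noting that the in-degree polynomial is a directed-multigraph invariant applied to the quiver. Your added verification that the in-degree multiset is preserved under multigraph isomorphism simply makes explicit what the paper leaves implicit.
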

\begin{corollary}
    The in-degree quiver polynomial matrices are invariants of $1$-linkoids in the case of quandles. 
\end{corollary}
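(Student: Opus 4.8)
The plan is to show that the in-degree quiver polynomial matrix is an invariant by reducing it to the invariance of its individual entries, each of which is an in-degree quiver polynomial, and then invoking the already-established machinery. The key observation is that the matrix is nothing more than an array of in-degree quiver polynomials indexed by choices of basepoints. So the entire statement follows once we know each entry is well-defined and invariant.

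First I would unwind the definitions. The $(i,j)$-th entry of $\Phi_{X,\mathbf{S}}^{M_k,\textup{deg}^+}(L)$ is by definition $\Phi_{(X,x_i,x_j)}^{\textup{deg}^+, S_{i,j}}(L)$, the in-degree quiver polynomial of $L$ taken with respect to the $2$-pointed quandle $(X,x_i,x_j)$ and the endomorphism subset $S_{i,j}$. Since $L$ is a $1$-linkoid, the relevant pointed quandle should carry $2n = 2$ basepoints, so $(X,x_i,x_j)$ is exactly the right kind of object, and the in-degree quiver polynomial $\Phi_{(X,x_i,x_j)}^{\textup{deg}^+, S_{i,j}}(L)$ is defined. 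Next I would cite the preceding corollary, which states that the in-degree quiver polynomials are invariants of $n$-linkoids in the case of $2n$-pointed quandles; specializing to $n=1$ gives that each entry $\Phi_{(X,x_i,x_j)}^{\textup{deg}^+, S_{i,j}}(L)$ is an invariant of the $1$-linkoid $L$.

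The remaining point is that assembling invariant quantities into a matrix of fixed shape yields an invariant. Here I would note that the index set $\{(i,j) : i,j \in \{1,\dots,k\}\}$ and the choices $\mathbf{S}=\{S_{i,j}\}$ depend only on the ambient quandle $X$ and not on any choice of diagram for $L$; the ordering of the basepoints and the size $k$ are fixed once $X$ is fixed. Therefore, if $L \sim L'$, then for every pair $(i,j)$ the corresponding entries agree by entrywise invariance, and since the entries sit in the same positions of a $k\times k$ array, the two matrices coincide. This is precisely the content of Corollary~\ref{invariant} read at the level of the matrix-valued assignment: the matrix is an invariant of directed multigraphs' data applied coordinatewise.

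I do not expect a genuine obstacle here, since the result is a formal packaging of the previous corollary rather than a new computation; the statement even announces that it follows \emph{by construction and Corollary~\ref{invariant}}. The only subtlety worth flagging explicitly is that the comparison of two matrices must be done for a fixed quandle $X$ with a fixed enumeration $x_1,\dots,x_k$ of its elements and a fixed family $\mathbf{S}$, so that the matrices being compared have the same indexing; once that convention is in place the invariance is immediate from the entrywise invariance.
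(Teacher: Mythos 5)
Your argument is correct and matches the paper's reasoning: the paper dispatches both corollaries with the single remark that they follow ``by construction and Corollary~\ref{invariant},'' which is exactly the entrywise reduction you spell out. Your additional care about fixing the enumeration of $X$ and the family $\mathbf{S}$ before comparing matrices is a reasonable elaboration of the same idea, not a different route.
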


The counting invariant $\Phi_\mathcal{X}^\mathbb{Z}(L)$ and the counting matrix $\Phi_X^{M_k}(L)$ associated with $\mathcal{X}$ and $X$ respectively are computable and effective invariants of linkoids. However, a set is more than its cardinality, and the following examples show that the in-degree quiver polynomial and the in-degree quiver polynomial matrix extract additional information from the coloring sets. 
\begin{example}\label{ex:effective}
    Let $X$ be the quandle of cardinality $4$ with the following operation table,
\[
\begin{tabular}{l|cccc}
 $\tr$ & 0 & 1 & 2 &3\\
 \hline
 0& 0 & 2 & 0 & 2 \\
 1& 3 & 1 & 3 & 1 \\
 2& 2 & 0 & 2 & 0 \\
 3& 1 & 3 & 1 & 3
\end{tabular}.
\]
\begin{figure}[h!]
    \centering
\includegraphics{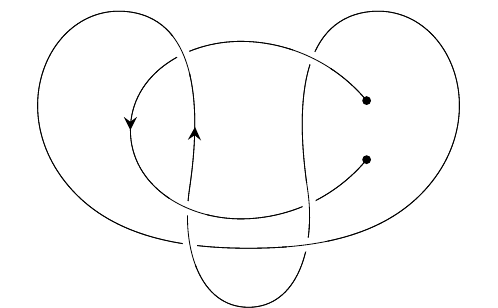}
    \caption{Digram $D_1$ of the 1-linkoid $L_1$.}
    \label{fig:16a1}
\end{figure}

\begin{figure}[h!]
    \centering
\includegraphics{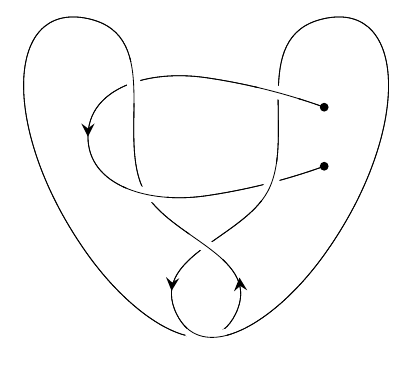}
    \caption{Diagram $D_2$ of the 1-linkoid $L_2$}
    \label{fig:16a5}
\end{figure}
    We will consider the following two oriented linkoids $L_1$ and $L_2$ with diagrams $D_1$ and $D_2$, see Figure~\ref{fig:16a1} and \ref{fig:16a5}. The two linkoids have equal quandle counting matrices with respect to $X$
\[ \Phi_X^{M_4} (L_1) =\begin{bmatrix}
4 & 0 & 0 & 0\\
0 & 4 & 0 & 0\\
0 & 0 & 4 & 0\\
0 & 0 & 0 & 4
\end{bmatrix} = \Phi_X^{M_4} (L_2),\]
 but the two linkoids are distinguished by their full in-degree quiver polynomial matrix,

\begin{eqnarray*}
\Phi_X^{M_4,\textup{deg}^+} (L_1) &=& \begin{bmatrix}
u^8+u^4+2u^2 & 0 & 0 & 0\\
0 & u^8+u^4+2u^2 & 0 & 0\\
0 & 0 & u^8+u^4+2u^2 & 0\\
0 & 0 & 0 & u^8+u^4+2u^2 
\end{bmatrix}\neq \\
\Phi_X^{M_4,\textup{deg}^+} (L_2) &=& \begin{bmatrix}
u^{10}+3u^2 & 0 & 0 & 0\\
0 & u^{10}+3u^2 & 0 & 0\\
0 & 0 & u^{10}+3u^2 & 0\\
0 & 0 & 0 & u^{10}+3u^2 
\end{bmatrix}.
\end{eqnarray*}

Additionally, from the full in-degree quiver polynomial matrix, we can see that we only have to consider a pointed quandle of the form $(X, i,i)$. For example, consider the pointed quandle $\mathcal{X}=(X, 0,0)$ where $X$ is the quandle defined above. The pointed quandle counting invariant of $L_1$ and $L_2$ are equal,
$\Phi_{\mathcal{X}}^\mathbb{Z}(L_1) = 4 = \Phi_{\mathcal{X}}^\mathbb{Z}(L_2)$, but their full pointed quandle coloring quivers distinguish them, see Figure~\ref{fig:16a116a5Quiver}. Specifically the full in-degree quiver polynomial with respect to $\mathcal{X}=(X, 0,0)$ distinguishes the two linkoids,
\[\Phi_{\mathcal{X}}^{\textup{deg}^+}(L_1) = u^8+u^4+2u^2 \neq u^{10}+3u^2 =\Phi_{\mathcal{X}}^{\textup{deg}^+}(L_2)\]

\begin{figure}[h]
    \centering
    \includegraphics[width=0.4\linewidth]{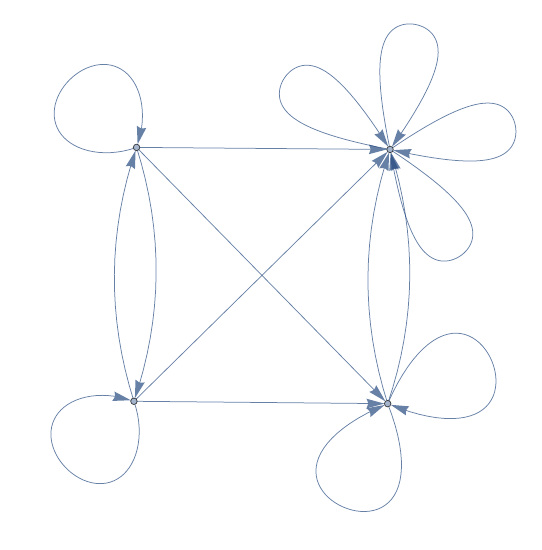}
    \includegraphics[width=0.4\linewidth]{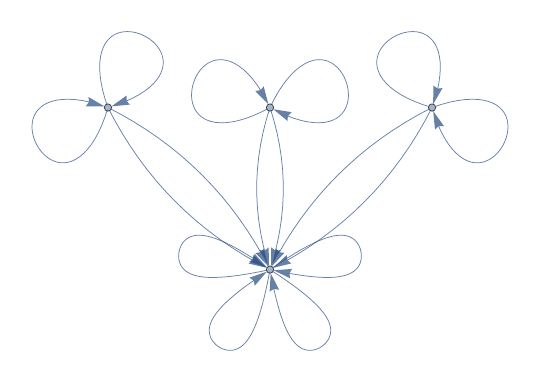}
    \caption{Full pointed quandle coloring quivers. On the left, we have $\mathcal Q_{\mathcal X}(L_1)$ and on the right we have $\mathcal Q_{\mathcal X}(L_2)$.}
    \label{fig:16a116a5Quiver}
\end{figure}
\end{example}

\section{Pointed Quandle Coloring Quivers of Linkoids of $\mathcal{T}(p,2)$-type}\label{torus-type}

In \cite{BaCa}, Basi and Caprau computed the (unpointed) quandle coloring quiver of the $\mathcal T(p,2)$ torus link with respect to the dihedral quandle, in \cite{ZhLi}, Liu and Zhou did the same for the $\mathcal T(p,3)$ torus link, and in \cite{EJL}, Elhamdadi, Jones, and Liu generalized Basi, Caprau, Liu, and Zhou's work to the $\mathcal T(p,q)$ torus link. In this section, we study the pointed quandle coloring quiver of the $1$-linkoid of $\mathcal T(p,2)$-type which we denote 
$\widetilde{\mathcal{T}(p,2)}$. In particular, we will compute $\mathcal Q_{(\mathbb{Z}_n,y_1,y_2)}(\widetilde{\mathcal T(p,2)})$ when 
$y_1\not\equiv y_2\bmod n$, $\gcd(p,n)=1$, or $\gcd(p,n)$ is prime. In what follows, we consider the diagram for $\widetilde{\mathcal{T}(p,2)}$ in Figure~\ref{fig:T(p,2)}. The diagram consists of $p$ crossings and $p+1$ arcs labeled $\lbrace x_1, x_2, \dots, x_{p+1}\rbrace$. Additionally, $\mathbb Z_n$ will always denote the dihedral quandle defined in Example~\ref{def:dihedral}, and $\mathcal Z$ will always denote a $2$-pointed dihedral quandle with one repeated basepoint. That is, $\mathcal Z=(\mathbb Z_n,y,y)$ for some $y\in \mathbb Z_n$.

\begin{remark}\label{rem:Q(T(p,2)}
\begin{figure}[h!]
    \centering
\includegraphics{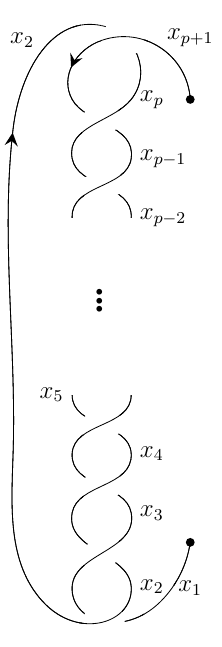}
    \caption{A 1-linkoid diagram of $\widetilde{\mathcal T(p,2)}$.}
    \label{fig:T(p,2)}
\end{figure}

From Figure~\ref{fig:T(p,2)}, we obtain the following presentation for the fundamental quandle of $\widetilde{\mathcal T(p,2)}$: 
$$Q(\widetilde{\mathcal T(p, 2)}) = \langle x_1,x_2,\dots, x_p, x_{p+1}\mid x_p=x_2\tr x_{p+1} \text{ and }x_i=x_{i+2}\tr x_{i+1} \text{ for all }1\le i\le p-1\rangle.$$
Furthermore, the fundamental pointed quandle of $\widetilde{\mathcal T(p, 2)}$ is $P(\widetilde{\mathcal T(p,2)})=(Q(\widetilde{\mathcal T(p,2)}), x_1, x_{p+1})$.

\end{remark}

We note that from the top crossing of the diagram, we will obtain the relation $x_2 \tr x_{p+1} = x_p$, and the remaining $p-1$ crossings follow the descending pattern given by $x_{i+2} \tr x_{i+1} = x_i$. 
Since $P(\widetilde{\mathcal{T}(p,2)})$ has two basepoints, we only consider $2$-pointed dihedral quandles.
The following lemma derives some facts about the colorings of the $\widetilde{\mathcal T(p,2)}$ linkoids with respect to $\mathbb{Z}_n$ from the relations of the fundamental pointed quandle in Remark~\ref{rem:Q(T(p,2)}.

\begin{lemma}\label{lem:Q(T(p,2))homprops}
    Let $p,n\in\mathbb{N}$. If $\varphi\in \hom(Q(\widetilde{\mathcal T(p,2)},\mathbb Z_n)$, then $p\varphi(x_2)\equiv p\varphi(x_{p+1})\bmod n$ and $\varphi(x_1)\equiv\varphi(x_{p+1})\bmod n$.
\end{lemma}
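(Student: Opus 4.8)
The plan is to work directly with the quandle presentation from Remark~\ref{rem:Q(T(p,2)} and translate each relation into the arithmetic of the dihedral quandle. Writing $a_i := \varphi(x_i)\in\Z_n$ and recalling that the dihedral operation is $x\tr y = 2y-x$, the descending relations $x_i = x_{i+2}\tr x_{i+1}$ become $a_i \equiv 2a_{i+1}-a_{i+2}\pmod n$ for $1\le i\le p-1$, while the exceptional top relation $x_p = x_2\tr x_{p+1}$ becomes $a_p \equiv 2a_{p+1}-a_2\pmod n$. So the content of the lemma is purely a statement about the solutions of this linear system modulo $n$.

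First I would analyze the descending relations. Rewriting $a_i \equiv 2a_{i+1}-a_{i+2}$ as $a_{i+1}-a_i \equiv a_{i+2}-a_{i+1}$ shows that the consecutive differences are all equal; hence, setting $d := a_2-a_1$, the values form an arithmetic progression $a_i \equiv a_1+(i-1)d \pmod n$ for all $1\le i\le p+1$. In particular $a_2 = a_1+d$, $a_p = a_1+(p-1)d$, and $a_{p+1}=a_1+pd$. Next I would substitute these expressions into the remaining top relation $a_p\equiv 2a_{p+1}-a_2$, which yields $a_1+(p-1)d \equiv a_1+(2p-1)d\pmod n$ and simplifies to the single congruence $pd\equiv 0\pmod n$.

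This identity $pd\equiv 0$ is the key from which both conclusions follow. Since $a_{p+1}-a_1 = pd \equiv 0$, we obtain $\varphi(x_1)\equiv\varphi(x_{p+1})\pmod n$; and since $p(a_2-a_{p+1}) = p(d-pd) = (1-p)(pd)\equiv 0$, we obtain $p\varphi(x_2)\equiv p\varphi(x_{p+1})\pmod n$.

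The computation is elementary, so there is no deep obstacle; the only points needing care are bookkeeping ones. I would double-check that the top crossing really carries the asymmetric form $x_p = x_2\tr x_{p+1}$ rather than following the descending pattern, since it is precisely this single exceptional relation that produces the constraint $pd\equiv 0$ — without it the progression would be unconstrained. I would also verify the index range so that the arithmetic-progression formula remains valid all the way up to $a_{p+1}$, and keep the orientation convention $x\tr y = 2y-x$ consistent throughout, since reversing it would interchange the coefficients and alter the resulting congruence.
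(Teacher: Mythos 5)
Your proof is correct and follows essentially the same route as the paper: both translate the crossing relations into linear congruences for the dihedral quandle and extract the two conclusions from them. Your observation that the descending relations force the labels into an arithmetic progression, so that the top relation collapses to the single constraint $pd\equiv 0 \bmod n$, is a tidy reorganization of the paper's telescoping substitution, but not a different argument.
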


\begin{proof}
    Let $\varphi\in \hom(Q(\widetilde{\mathcal T(p,2)}), \mathbb Z_n)$ and$i\in\{1,\dots, p-1\}$. By the relations from Remark~\ref{rem:Q(T(p,2)} and since $\varphi$ is a quandle homomorphism, 
    \begin{align*}
        \varphi(x_i) &\equiv \varphi(x_{i+2}\tr x_{i+1})\\
        &\equiv \varphi(x_{i+2})\tr \varphi(x_{i+1})\\
        &\equiv 2\varphi(x_{i+1})-\varphi(x_{i+2})\bmod n,
    \end{align*}
    and 
    \begin{align*}
        \varphi(x_p) &\equiv \varphi(x_2\tr x_{p+1})\\
        &\equiv \varphi(x_2)\tr \varphi(x_{p+1})\\
        &\equiv 2\varphi(x_{p+1})- \varphi(x_2)\bmod n.
    \end{align*}
    Combining these,
    \begin{align*}
        \varphi(x_2)&\equiv 2\varphi(x_3)-\varphi(x_4)\\
        &\equiv 2(2(\varphi(x_4)-\varphi(x_5))-\varphi(x_4)\\
        &\equiv 3\varphi(x_4)-2\varphi(x_5)\\
        &\phantom{000000000}\vdots\\
        &\equiv (p-1)\varphi(x_p)-(p-2)\varphi(x_{p+1})\\
        &\equiv (p-1)(2\varphi(x_{p+1})-\varphi(x_2))-(p-2)\varphi(x_{p+1})\\
        &\equiv p\varphi(x_{p+1})-(p-1)\varphi(x_2)\bmod n,
    \end{align*}
    which yields $p\varphi(x_2)\equiv p\varphi(x_{p+1})\bmod n$.
    Similarly, 
    \begin{align*}
        \varphi(x_1)&\equiv 2\varphi(x_2)-\varphi(x_3)\\
        &\equiv 2(2(\varphi(x_3)-\varphi(x_4))-\varphi(x_3)\\
        &\equiv 3\varphi(x_3)-2\varphi(x_4)\\
        &\phantom{0000000000}\vdots\\
        &\equiv p\varphi(x_p)-(p-1)\varphi(x_{p+1})\\
        &\equiv p(2\varphi(x_{p+1})-\varphi(x_2))-(p-1)\varphi(x_{p+1})\\
        &\equiv 2p\varphi(x_{p+1})-p\varphi(x_2)-(p-1)\varphi(x_{p+1})\\
        &\equiv 2p\varphi(x_{p+1})-p\varphi(x_{p+1})-(p-1)\varphi(x_{p+1})\\
        &\equiv \varphi(x_{p+1})\bmod n.
    \end{align*}
\end{proof}

\begin{corollary}\label{cor:quandlesconsidered}
    Let $p,n\in\mathbb{N}$. If $y_1\not\equiv y_2\bmod n$, then $\hom(P(\widetilde{\mathcal T(p,2)}), (\mathbb Z_n,y_1,y_2))= \emptyset$.
\end{corollary}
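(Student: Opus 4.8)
The plan is to derive the corollary directly from Lemma~\ref{lem:Q(T(p,2))homprops}, which I may invoke freely. The key observation is that a pointed quandle homomorphism from $P(\widetilde{\mathcal T(p,2)})$ to $(\mathbb Z_n,y_1,y_2)$ is in particular a quandle homomorphism $\varphi\in\hom(Q(\widetilde{\mathcal T(p,2)}),\mathbb Z_n)$ that additionally sends the basepoints to the prescribed targets, namely $\varphi(x_1)=y_1$ and $\varphi(x_{p+1})=y_2$. So the strategy is to show that the basepoint condition is incompatible with the congruence the lemma forces on any such $\varphi$.

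First I would suppose, for contradiction, that $\hom(P(\widetilde{\mathcal T(p,2)}),(\mathbb Z_n,y_1,y_2))$ is nonempty and pick some $\varphi$ in it. By definition of the fundamental pointed quandle in Remark~\ref{rem:Q(T(p,2)}, the basepoints of $P(\widetilde{\mathcal T(p,2)})$ are $x_1$ (the leg) and $x_{p+1}$ (the head), so the pointed homomorphism condition gives $\varphi(x_1)=y_1$ and $\varphi(x_{p+1})=y_2$ in $\mathbb Z_n$. On the other hand, since $\varphi$ is an ordinary quandle homomorphism, Lemma~\ref{lem:Q(T(p,2))homprops} yields $\varphi(x_1)\equiv\varphi(x_{p+1})\bmod n$. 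Substituting the basepoint values gives $y_1\equiv y_2\bmod n$, contradicting the hypothesis $y_1\not\equiv y_2\bmod n$. Hence no such $\varphi$ exists and the set is empty.

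The content here is essentially bookkeeping: the corollary is exactly the contrapositive of the second congruence $\varphi(x_1)\equiv\varphi(x_{p+1})\bmod n$ from the lemma, once one remembers that $x_1$ and $x_{p+1}$ are the two basepoints. There is no computational obstacle, since the hard work establishing that congruence was already carried out in the telescoping computation in the proof of Lemma~\ref{lem:Q(T(p,2))homprops}. The only point requiring a moment's care is confirming that a pointed homomorphism into $(\mathbb Z_n,y_1,y_2)$ really does force $\varphi(x_1)=y_1$ and $\varphi(x_{p+1})=y_2$ with these ordered correctly, which follows immediately from the definition of pointed quandle homomorphism together with the identification of the basepoints of $P(\widetilde{\mathcal T(p,2)})$ as $(x_1,x_{p+1})$ in that order.

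I would therefore expect the proof to be a single short paragraph: assume a coloring exists, read off the basepoint values, apply the $\varphi(x_1)\equiv\varphi(x_{p+1})$ half of the lemma, and observe the resulting contradiction with $y_1\not\equiv y_2\bmod n$. No case analysis or number-theoretic input (such as facts about $\gcd(p,n)$) is needed here; the first congruence $p\varphi(x_2)\equiv p\varphi(x_{p+1})\bmod n$ from the lemma plays no role in this particular corollary and is presumably reserved for the later counting arguments in Section~\ref{torus-type}.
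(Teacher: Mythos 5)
Your proof is correct and matches the paper's argument essentially verbatim: both identify $x_1$ and $x_{p+1}$ as the basepoints, apply the congruence $\varphi(x_1)\equiv\varphi(x_{p+1})\bmod n$ from Lemma~\ref{lem:Q(T(p,2))homprops}, and conclude $y_1\equiv y_2\bmod n$. The only cosmetic difference is that the paper phrases this as a proof by contrapositive rather than by contradiction.
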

\begin{proof}
    We give a proof by contrapositive. Suppose $\varphi\in \hom(P(\widetilde{\mathcal T(p,2)}), (\mathbb Z_n,y_1,y_2))$. By Remark~\ref{rem:Q(T(p,2)}, the basepoints of $P(\widetilde{\mathcal T(p,2)})$ are $x_1$ and $x_{p+1}$, so since $\varphi$ is a pointed quandle homomorphism, $\varphi(x_1)\equiv y_1\bmod n$ and $\varphi(x_{p+1})\equiv y_2\bmod n$. By Lemma~\ref{lem:Q(T(p,2))homprops}, $\varphi(x_1)\equiv \varphi(x_{p+1})\bmod n$. Thus,
    $y_1\equiv y_2\bmod n$.
\end{proof}

Corollary~\ref{cor:quandlesconsidered} gives us all we need to compute $\mathcal{Q}_{(\mathbb{Z}_n,y_1,y_2)}(\widetilde{\mathcal{T}(p,2)})$ when $y_1\not\equiv y_2\bmod n$.

\begin{corollary}
    Let $p,n\in\mathbb{N}$. Let $y_1,y_2\in\mathbb{Z}_n$. If $y_1\not\equiv y_2\bmod n$, then $\mathcal{Q}_{(\mathbb{Z}_n,y_1,y_2)}(\widetilde{\mathcal{T}(p,2)})=(\emptyset,\emptyset)$. 
\end{corollary}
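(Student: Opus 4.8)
The plan is to obtain this statement as an immediate consequence of Corollary~\ref{cor:quandlesconsidered} together with the definition of the full pointed quandle coloring quiver. Recall that for any finite $2n$-pointed quandle $\mathcal{X}$ and any $S \subseteq \End(\mathcal{X})$, the quiver $\mathcal{Q}_{\mathcal{X}}^S(L) = (V,w)$ is a directed multigraph whose vertex set is the coloring set $V = \hom(P(L),\mathcal{X})$, with weight function $w \colon V \times V \to \mathbb{N}\cup\{0\}$ counting, for each ordered pair of vertices, the endomorphisms in $S$ that carry one coloring to the other. In the full case $S = \End(\mathcal{X})$, the entire quiver is built out of this single datum $V$, so controlling $V$ controls the quiver.

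First I would specialize Corollary~\ref{cor:quandlesconsidered} to the pointed quandle $\mathcal{X} = (\mathbb{Z}_n, y_1, y_2)$ under the hypothesis $y_1 \not\equiv y_2 \bmod n$. That corollary yields $\hom(P(\widetilde{\mathcal{T}(p,2)}), (\mathbb{Z}_n, y_1, y_2)) = \emptyset$, which says precisely that the vertex set of $\mathcal{Q}_{(\mathbb{Z}_n,y_1,y_2)}(\widetilde{\mathcal{T}(p,2)})$ is empty, i.e. $V = \emptyset$.

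Finally, I would invoke Definition~\ref{def:multigraph}. Since $V = \emptyset$, the domain $V \times V$ of the weight function $w$ is empty, so $w$ is the empty function and encodes no arcs. The directed multigraph with empty vertex set and empty weight function is exactly the pair $(\emptyset, \emptyset)$, giving $\mathcal{Q}_{(\mathbb{Z}_n,y_1,y_2)}(\widetilde{\mathcal{T}(p,2)}) = (\emptyset,\emptyset)$.

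There is no genuine obstacle here: the result is a direct unwinding of the definition of the quiver once the emptiness of the coloring set is supplied by Corollary~\ref{cor:quandlesconsidered}. The only point worth stating carefully is the bookkeeping convention that an empty vertex set forces an empty arc set, so that the resulting object is the empty directed multigraph $(\emptyset,\emptyset)$ rather than a graph carrying any residual structure.
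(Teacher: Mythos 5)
Your proposal is correct and follows essentially the same route as the paper: apply Corollary~\ref{cor:quandlesconsidered} to conclude the coloring set, and hence the vertex set of the quiver, is empty, and then note that an empty vertex set forces an empty arc set. The extra remarks about the weight function being the empty function are just a more explicit unwinding of the same observation.
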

\begin{proof}
    By Corollary~\ref{cor:quandlesconsidered}, $V(\mathcal{Q}_{(\mathbb{Z}_n,y_1,y_2)}(\widetilde{\mathcal{T}(p,2)}))=\emptyset$, so $\mathcal{Q}_{(\mathbb{Z}_n,y_1,y_2)}(\widetilde{\mathcal{T}(p,2)})$ is a quiver with no vertices and no edges.
\end{proof}

With the trivial case resolved, in what follows, we will only consider unpointed dihedral quandles $\mathbb{Z}_n$ and $2$-pointed dihedral quandles with one repeated basepoint $\mathcal{Z}$.
The following lemmas and theorem describe the coloring set of $\widetilde{\mathcal T(p,2)}$ with respect to $\mathcal Z$. 

\begin{lemma}\label{lem:determininghoms}
    Let $p,n\in\mathbb{N}$ and $\alpha, \beta\in \hom(Q(\widetilde{\mathcal T(p,2)}), \mathbb Z_n)$. If $\alpha(x_2)\equiv \beta(x_2)\bmod n$ and $\alpha(x_{p+1})\equiv \beta(x_{p+1})\bmod n$, then $\alpha=\beta$.
\end{lemma}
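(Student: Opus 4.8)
The plan is to prove the stronger statement that any $\varphi\in\hom(Q(\widetilde{\mathcal T(p,2)}),\mathbb Z_n)$ is completely determined by the pair of values $\varphi(x_2)$ and $\varphi(x_{p+1})$; the lemma then follows at once by comparing $\alpha$ and $\beta$ generator by generator. The key observation is that, working in the dihedral quandle where $a\tr b=2b-a$, the crossing relations of Remark~\ref{rem:Q(T(p,2)} become a second-order linear recurrence that can be run in the forward direction once two consecutive values are known.

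First I would rewrite the relation $x_i=x_{i+2}\tr x_{i+1}$ for a homomorphism into $\mathbb Z_n$. Since $\varphi$ is a quandle homomorphism, $\varphi(x_i)\equiv 2\varphi(x_{i+1})-\varphi(x_{i+2})\bmod n$, which I would solve for the highest index to obtain the \emph{forward} recurrence $\varphi(x_{i+2})\equiv 2\varphi(x_{i+1})-\varphi(x_i)\bmod n$, valid for all $1\le i\le p-1$. Thus $\varphi$ is determined on $x_3,\dots,x_{p+1}$ once its values on $x_1$ and $x_2$ are fixed. The remaining difficulty is that the hypotheses give us control of $\varphi(x_2)$ and $\varphi(x_{p+1})$, not of the seed value $\varphi(x_1)$.

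This is where Lemma~\ref{lem:Q(T(p,2))homprops} does the essential work: it guarantees $\varphi(x_1)\equiv\varphi(x_{p+1})\bmod n$ for every such $\varphi$. Applying this to both $\alpha$ and $\beta$ and combining with the hypothesis $\alpha(x_{p+1})\equiv\beta(x_{p+1})$, I would conclude $\alpha(x_1)\equiv\beta(x_1)\bmod n$. Together with the hypothesis $\alpha(x_2)\equiv\beta(x_2)$, this shows $\alpha$ and $\beta$ agree on the two seed generators $x_1$ and $x_2$. A straightforward two-step induction on $i$ using the forward recurrence—if $\alpha$ and $\beta$ agree on $x_i$ and $x_{i+1}$, then $\alpha(x_{i+2})\equiv 2\alpha(x_{i+1})-\alpha(x_i)\equiv 2\beta(x_{i+1})-\beta(x_i)\equiv\beta(x_{i+2})$—propagates this agreement through $x_3,\dots,x_{p+1}$. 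Hence $\alpha(x_i)\equiv\beta(x_i)$ for every $i$, and since a quandle homomorphism is determined by its values on generators, $\alpha=\beta$.

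I do not expect a serious obstacle; the single point requiring care is the direction of the recurrence. Because the presentation expresses each $x_i$ in terms of the higher-indexed generators $x_{i+1}$ and $x_{i+2}$, one must first solve for $\varphi(x_{i+2})$ to obtain a recurrence that builds \emph{upward} from $x_1,x_2$, and Lemma~\ref{lem:Q(T(p,2))homprops} is precisely what converts knowledge of $\varphi(x_{p+1})$ into knowledge of the initial value $\varphi(x_1)$ needed to start it.
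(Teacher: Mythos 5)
Your proof is correct, but it runs in the opposite direction from the paper's. The paper seeds the recurrence at the \emph{top} of the diagram: from the hypothesis it gets agreement on $x_2$ and $x_{p+1}$, uses the top-crossing relation $x_p=x_2\tr x_{p+1}$ to get agreement on $x_p$, and then applies the relations $x_i=x_{i+2}\tr x_{i+1}$ exactly as written to propagate agreement \emph{downward} from the pair $(x_{p+1},x_p)$ to $x_1$ by strong induction. You instead solve the relation for the highest index to obtain the forward recurrence $\varphi(x_{i+2})\equiv 2\varphi(x_{i+1})-\varphi(x_i)\bmod n$ and propagate \emph{upward} from the pair $(x_1,x_2)$; since the hypothesis controls $x_{p+1}$ rather than $x_1$, you must invoke Lemma~\ref{lem:Q(T(p,2))homprops} to convert $\alpha(x_{p+1})\equiv\beta(x_{p+1})$ into $\alpha(x_1)\equiv\beta(x_1)$. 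That lemma is proved earlier in the section, so the appeal is legitimate and your argument is complete. The trade-off: the paper's version is self-contained, using only the presentation relations and never needing the telescoping identities of Lemma~\ref{lem:Q(T(p,2))homprops}, and it avoids the direction-of-recurrence issue you flag; your version makes explicit the arguably more natural statement that a coloring is determined by the seed pair $(x_1,x_2)$, at the cost of importing the earlier lemma as a genuine dependency rather than a convenience.
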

\begin{proof}
    Assume $\alpha(x_2)\equiv \beta(x_2)\bmod n$ and $\alpha(x_{p+1})\equiv \beta(x_{p+1})\bmod n$. Since $\alpha$ and $\beta$ are quandle homomorphisms, it suffices to show that they agree on the generators of $Q(\widetilde{\mathcal T(p,2)})$. By assumption, $\alpha$ and $\beta$ agree on $x_2$ and $x_{p+1}$. Thus, since $x_p=x_2\tr x_{p+1}$ by Remark~\ref{rem:Q(T(p,2)}, 
    $$\alpha(x_p)\equiv\alpha(x_2)\tr\alpha(x_{p+1})\equiv\beta(x_2)\tr\beta(x_{p+1})\equiv\beta(x_p)\bmod n.$$    
    Now we use strong induction on the rest of the generators, counting down from $p-1$, our base case. By Remark~\ref{rem:Q(T(p,2)}, $x_{p-1}=x_{p+1}\tr x_{p}$, so
    $$\alpha(x_{p-1})\equiv\alpha(x_{p+1})\tr\alpha(x_{p})\equiv\beta(x_{p+1})\tr\beta(x_{p})\equiv\beta(x_{p-1})\bmod n.$$
    Now assume that for some $i\in\{1,\dots, p-2\}$, for all $j>i$, $\alpha(x_j)\equiv\beta(x_j)\bmod n$. Then, in particular, $\alpha(x_{i+1})\equiv\beta(x_{i+1})\bmod n$ and $\alpha(x_{i+2})\equiv\beta(x_{i+2})\bmod n$. Again, by Remark~\ref{rem:Q(T(p,2)}, $x_i=x_{i+2}\tr x_{i+1}$, so 
    $$\alpha(x_{i})\equiv\alpha(x_{i+2})\tr\alpha(x_{i+1})\equiv\beta(x_{i+2})\tr\beta(x_{i+1})\equiv\beta(x_{i})\bmod n.$$
    Thus, by strong induction, $\alpha(x_i)\equiv\beta(x_i)\bmod n$ for all $i\in\{1,2,\dots,p+1\}$, so $\alpha=\beta$.
\end{proof}

Lemma~\ref{lem:determininghoms} establishes that colorings of the $\widetilde{\mathcal T(p,2)}$ linkoids with respect to the unpointed dihedral quandle are entirely determined by the images of $x_2$ and $x_{p+1}$. The following lemma establishes the form of many of the colorings of $\widetilde{\mathcal T(p,2)}$ with respect to $\mathcal Z$.

\begin{lemma}\label{lem:homsformula}
    Let $p,n\in\mathbb{N}$, $\gcd(p,n)=c$ and $d=\tfrac{n}{c}$. For each $x\in\mathbb Z_n$, let $L(x)$ be the least nonnegative residue of $x$ modulo $p$. Suppose $x_i$ is a generator of $P(\widetilde{\mathcal T(p,2)})$. For each $k\in\{0,1,\dots,c-1\}$, define $\alpha_k:P(\widetilde{\mathcal T(p,2)})\to \mathcal Z$ by 
    $$\alpha_k(x_i)\equiv y+L(i-1)\cdot kd\bmod n.$$
    Let 
    $$\mathcal A=\{\alpha_k\mid k\in\{0,1,\dots,c-1\}\}.$$
    Then $\mathcal A\subseteq \hom(P(\widetilde{\mathcal T(p,2)}), \mathcal Z)$.
\end{lemma}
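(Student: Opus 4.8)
The plan is to verify directly that each $\alpha_k$ satisfies the two conditions required of a pointed quandle homomorphism into $\mathcal Z=(\mathbb Z_n,y,y)$: that it respects the basepoints and that it is a quandle homomorphism. Since $Q(\widetilde{\mathcal T(p,2)})$ is given by the presentation in Remark~\ref{rem:Q(T(p,2)}, by the universal property of presented quandles it suffices to check that the assignment $x_i\mapsto\alpha_k(x_i)$ respects each defining relation once the dihedral operation $a\tr b=2b-a$ is substituted; no separate well-definedness argument is then needed. Throughout I will abbreviate $m=kd$ and isolate the single arithmetic fact that drives the whole computation: since $c=\gcd(p,n)$ divides $p$, we have $pm=pkd=(p/c)kn\equiv 0\bmod n$.

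For the basepoints, recall that the basepoints of $P(\widetilde{\mathcal T(p,2)})$ are $x_1$ and $x_{p+1}$, and a pointed homomorphism into $\mathcal Z$ must send both to $y$. Since $L(0)=0$ and $L(p)=0$ (as $0$ and $p$ are each congruent to $0$ modulo $p$), the defining formula gives $\alpha_k(x_1)=y+L(0)\,m=y$ and $\alpha_k(x_{p+1})=y+L(p)\,m=y$, as required.

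For the homomorphism property I would split the defining relations into a generic case and two boundary cases. Substituting the formula into $x_i=x_{i+2}\tr x_{i+1}$ reduces it to $L(i-1)\,m\equiv\big(2L(i)-L(i+1)\big)m\bmod n$. When $1\le i\le p-2$ all three indices $i-1,i,i+1$ lie in $\{0,\dots,p-1\}$, so $L$ acts as the identity and the congruence collapses to the tautology $(i-1)m\equiv(i-1)m$. The only index that wraps is $i+1=p$ at $i=p-1$, where $L(p)=0$; there the relation becomes $(p-2)m\equiv(2p-2)m\bmod n$, i.e.\ $pm\equiv 0\bmod n$, which holds by the arithmetic fact above. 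The remaining top relation $x_p=x_2\tr x_{p+1}$ reduces, using $L(p-1)=p-1$, $L(1)=1$, and $L(p)=0$, to $(p-1)m\equiv -m\bmod n$, again equivalent to $pm\equiv 0\bmod n$.

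Assembling these, each $\alpha_k$ respects the basepoints and all defining relations, hence lies in $\hom(P(\widetilde{\mathcal T(p,2)}),\mathcal Z)$, giving $\mathcal A\subseteq\hom(P(\widetilde{\mathcal T(p,2)}),\mathcal Z)$. The main obstacle is not conceptual but bookkeeping: correctly tracking where the residue $L(\,\cdot\,)$ wraps modulo $p$, so that exactly the boundary relation at $i=p-1$ and the top relation produce the factor of $p$ while every interior relation is an identity. The single congruence $pm\equiv 0\bmod n$, which encodes the hypothesis $c\mid p$, is precisely what makes those two boundary relations consistent.
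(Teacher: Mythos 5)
Your proof is correct and follows essentially the same route as the paper's: verify the basepoints and check each defining relation directly, with everything hinging on the single congruence $pkd\equiv 0\bmod n$ coming from $\gcd(p,n)=c$. If anything, your explicit tracking of where $L(\cdot)$ wraps (only at $i+1=p$ and in the top relation) is slightly more careful than the paper's computation, which silently identifies $L(j)kd$ with $jkd$ modulo $n$.
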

\begin{proof}
    Let $\alpha_k\in\mathcal A$. Since $\alpha_k$ is defined on the generators of $P(\widetilde{\mathcal T(p,2)})$, we can extend $\alpha_k$ linearly to the rest of $P(\widetilde{\mathcal T(p,2)})$. 
    Further, to show that $\alpha_k$ is a pointed quandle homomorphism, it suffices to show that $\alpha_k$ preserves the basepoints and relations of $P(\widetilde{\mathcal T(p,2)})$. 
    For the basepoints, see that 
    $$\alpha_k(x_1)\equiv y+(1-1)kd\equiv y\bmod n$$
    and 
    $$\alpha_k(x_{p+1})\equiv y+L(p+1-1)\cdot kd\equiv y+L(p)kd\equiv y\bmod n.$$
    Since $c=\gcd(p,n)$, for some $s\in\mathbb Z$, $p=sc$, so $$(p-1)kd\equiv -kd+pkd\equiv -kd+sck\tfrac{n}{c}\equiv -kd+skn\equiv -kd\bmod n.$$
    Thus, for the first relation, $x_p=x_2\tr x_{p+1}$, observe that 
    \begin{align*}
        \alpha_k(x_p)&\equiv y+(p-1)kd\\
        &\equiv y-kd\\
        &\equiv 2y-(y+kd) \\
        &\equiv (y+kd)\tr y\\
        &\equiv \alpha_k(x_2)\tr \alpha_k(x_{p+1})\bmod n,
    \end{align*}
    as desired. The remaining relations are $x_i=x_{i+2}\tr x_{i+1}$ for each $i\in\{1,2,\dots,p-1\}$.
    Well,
    \begin{align*}
        \alpha_k(x_i) &\equiv y+(i-1)kd\\
        &\equiv 2(y+ikd)-(y+(i+1)kd)\\
        &\equiv (y+(i+1)kd)\tr (y+ikd)\\
        &\equiv \alpha_k(x_{i+2})\tr \alpha_k(x_{i+1})\bmod n,
    \end{align*}
    as desired. Thus, $\alpha_k\in \hom(P(\widetilde{\mathcal T(p,2)}),\mathcal Z)$, so $\mathcal A\subseteq \hom(P(\widetilde{\mathcal T(p,2)}),\mathcal Z)$.
\end{proof}

It turns out that $\mathcal A$ is the entire coloring set of $\widetilde{T(p,2)}$ with respect to $\mathcal Z$.

\begin{theorem}\label{thm:hom}

    Let $p,n\in\mathbb{N}$. Let $c=\gcd(p,n)$ and $d=\tfrac{n}{c}$. Let $\mathcal A$ be defined as in Lemma~\ref{lem:homsformula}. Then $\hom(P(\widetilde{\mathcal T(p,2)}),\mathcal Z)=\mathcal A$ and thus 
    $$\Phi_{\mathcal Z}^{\mathbb Z}(\widetilde{\mathcal T(p,2)})=|\hom(P(\widetilde{\mathcal T(p,2)}), \mathcal Z)|=c.$$

\end{theorem}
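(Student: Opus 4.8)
The inclusion $\mathcal A \subseteq \hom(P(\widetilde{\mathcal T(p,2)}),\mathcal Z)$ is already supplied by Lemma~\ref{lem:homsformula}, so the plan is to prove the reverse inclusion $\hom(P(\widetilde{\mathcal T(p,2)}),\mathcal Z)\subseteq \mathcal A$ and then count $\mathcal A$. The key observation I would start from is that the two preceding lemmas are exactly complementary. On one hand, Lemma~\ref{lem:determininghoms} says that any coloring $\alpha$ is completely determined by the pair of values $(\alpha(x_2),\alpha(x_{p+1}))$; on the other hand, since $\alpha$ is a \emph{pointed} homomorphism into $\mathcal Z=(\mathbb Z_n,y,y)$, we automatically have $\alpha(x_1)\equiv y$ and $\alpha(x_{p+1})\equiv y\bmod n$. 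Thus a $\mathcal Z$-coloring is pinned down by the single residue $\alpha(x_2)$, and the whole problem reduces to determining which values $\alpha(x_2)$ can take.

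Next I would constrain $\alpha(x_2)$ using Lemma~\ref{lem:Q(T(p,2))homprops}, which gives $p\,\alpha(x_2)\equiv p\,\alpha(x_{p+1})\bmod n$. Substituting $\alpha(x_{p+1})\equiv y$ and setting $t=\alpha(x_2)-y$ turns this into the linear congruence $pt\equiv 0\bmod n$. Its solution set is precisely $\{t\equiv 0\bmod d\}$, since $n/\gcd(p,n)=d$; equivalently $t\in\{0,d,2d,\dots,(c-1)d\}$, exactly $c$ residues modulo $n$. Hence $\alpha(x_2)\equiv y+kd\bmod n$ for some $k\in\{0,1,\dots,c-1\}$. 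For that $k$, the explicit formula of Lemma~\ref{lem:homsformula} gives $\alpha_k(x_2)\equiv y+L(1)\cdot kd\equiv y+kd$ and $\alpha_k(x_{p+1})\equiv y$, so $\alpha$ and $\alpha_k$ agree on both $x_2$ and $x_{p+1}$. Applying Lemma~\ref{lem:determininghoms} to the pair $\alpha,\alpha_k\in\hom(Q(\widetilde{\mathcal T(p,2)}),\mathbb Z_n)$ then forces $\alpha=\alpha_k\in\mathcal A$, which closes the reverse inclusion.

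Finally, for the cardinality I would note that the values $\alpha_k(x_2)\equiv y+kd$ are pairwise distinct modulo $n$ for $k=0,\dots,c-1$ (since $cd=n$ makes $0,d,\dots,(c-1)d$ distinct residues), so the $\alpha_k$ are pairwise distinct colorings and $|\mathcal A|=c$. Combined with the equality $\hom(P(\widetilde{\mathcal T(p,2)}),\mathcal Z)=\mathcal A$, this yields $\Phi_{\mathcal Z}^{\mathbb Z}(\widetilde{\mathcal T(p,2)})=c$.

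The genuinely load-bearing step is recognizing that the two prior lemmas dovetail: one bounds $\alpha(x_2)$ to a coset of the $d$-torsion of $\mathbb Z_n$, and the other guarantees that $\alpha(x_2)$ (together with the forced value at $x_{p+1}$) determines $\alpha$ entirely, so the computation collapses to counting solutions of $pt\equiv 0\bmod n$. Everything else is bookkeeping; the only mild subtleties I would double-check are the standard count that $pt\equiv 0\bmod n$ has exactly $\gcd(p,n)$ solutions (done without over- or undercounting), and the degenerate small cases such as $p=1$, where $L(1)$ and the coincidence $x_2=x_{p+1}$ should be verified to be consistent with the stated formula.
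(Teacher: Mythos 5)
Your proposal is correct and follows essentially the same route as the paper's proof: it uses Lemma~\ref{lem:homsformula} for one inclusion, combines Lemma~\ref{lem:Q(T(p,2))homprops} with basepoint preservation to reduce to the congruence $p(\alpha(x_2)-y)\equiv 0\bmod n$, invokes Lemma~\ref{lem:determininghoms} to identify $\alpha$ with some $\alpha_k$, and counts $\mathcal A$ by the distinctness of the values $y+kd$. The only differences are cosmetic (the substitution $t=\alpha(x_2)-y$ and the explicit remark about small $p$).
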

\begin{proof}

    By Lemma~\ref{lem:homsformula}, $\mathcal A\subseteq \hom(P(\widetilde{\mathcal T(p,2)}),\mathcal Z)$. It remains to be shown that $\hom(P(\widetilde{\mathcal T(p,2)}),\mathcal Z)\subseteq \mathcal A$. 
    Let $\alpha\in \hom(P(\widetilde{\mathcal T(p,2)}), \mathcal Z)$. 
    Since pointed quandle homomorphisms preserve basepoints, $\alpha(x_1)\equiv\alpha(x_{p+1})\equiv y$. 
    Further, by Lemma~\ref{lem:Q(T(p,2))homprops}, $p\alpha(x_2)\equiv p\alpha(x_{p+1})\bmod n$, so $p\alpha(x_2)\equiv py\bmod n$. 
    Equivalently,  $\alpha(x_2)\equiv y\bmod \tfrac{n}{c}$. Hence, for some $k\in\{0,1,\dots,c-1\}$, $$\alpha(x_2)\equiv y+k\tfrac{n}{c}\equiv y+kd\bmod n.$$ 
    Thus, by Lemma~\ref{lem:determininghoms}, $\alpha=\alpha_k\in\mathcal A$, so $\hom(P(\widetilde{\mathcal T(p,2)}), \mathcal Z)\subseteq \mathcal A$. Thus, $\hom(P(\widetilde{\mathcal T(p,2)}), \mathcal Z)=\mathcal A$. 

    Further, it is evident that $|\mathcal A|\le c$.
    For $i,j\in\{0,1,\dots,c-1\}$ with $i\ne j$, $$\alpha_i(x_2)\equiv y+id\not\equiv y+jd\equiv \alpha_j(x_2)\bmod n,$$ so $\alpha_i\ne \alpha_j$. Hence, $|\mathcal A|\ge c$, so $|\mathcal A|=c$ and thus
    $$\Phi_{\mathcal Z}^{\mathbb Z}(\widetilde{\mathcal T(p,2)})=|\hom(P(\widetilde{\mathcal T(p,2)}),\mathcal Z)|=|\mathcal A|=c.$$
\end{proof}

Now we are ready to compute the quandle counting matrix of $\widetilde{\mathcal T(p,2)}$ with respect to $\mathbb Z_n$.

\begin{corollary}\label{cor:ZnUnpointedEnds}
    Let $p,n\in\mathbb{N}$. Let $c=\gcd(p,n)$. Then $\Phi_{\mathbb Z_n}^{M_n}(\widetilde{\mathcal T(p,2)})=cI_n$.
\end{corollary}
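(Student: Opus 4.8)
The plan is to compute the $(i,j)$-th entry of the matrix $\Phi_{\mathbb{Z}_n}^{M_n}(\widetilde{\mathcal{T}(p,2)})$ directly from its definition and assemble the results into the claimed scalar multiple of the identity. By definition, the $(i,j)$-th entry is the pointed quandle counting invariant $\Phi_{(\mathbb{Z}_n, x_i, x_j)}^{\mathbb{Z}}(\widetilde{\mathcal{T}(p,2)}) = |\hom(P(\widetilde{\mathcal{T}(p,2)}), (\mathbb{Z}_n, x_i, x_j))|$, where $x_i, x_j$ range over the elements $0, 1, \dots, n-1$ of $\mathbb{Z}_n$ viewed as basepoints. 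The key observation is that the two basepoints of the fundamental pointed quandle are forced by Lemma~\ref{lem:Q(T(p,2))homprops} to map to the same value, so the off-diagonal and diagonal entries behave very differently.

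First I would handle the off-diagonal entries, where $i \neq j$. Here the two prospective basepoint images are distinct elements of $\mathbb{Z}_n$, so Corollary~\ref{cor:quandlesconsidered} applies directly: since $x_i \not\equiv x_j \bmod n$, the coloring set $\hom(P(\widetilde{\mathcal{T}(p,2)}), (\mathbb{Z}_n, x_i, x_j))$ is empty, and hence the $(i,j)$-th entry is $0$. This disposes of every off-diagonal entry at once and shows the matrix is diagonal.

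Next I would handle the diagonal entries, where $i = j$. In this case the pointed quandle $(\mathbb{Z}_n, x_i, x_i)$ is exactly a $2$-pointed dihedral quandle with one repeated basepoint, which is precisely the object denoted $\mathcal{Z}$ in the setup of this section. Theorem~\ref{thm:hom} then gives immediately that $|\hom(P(\widetilde{\mathcal{T}(p,2)}), (\mathbb{Z}_n, x_i, x_i))| = c$, where $c = \gcd(p,n)$, independent of the particular value $x_i$. Thus each of the $n$ diagonal entries equals $c$.

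Combining the two cases, the matrix has $c$ on every diagonal entry and $0$ on every off-diagonal entry, which is exactly $c I_n$, completing the proof. The argument is essentially bookkeeping rather than computation: all the real work has already been carried out in Lemma~\ref{lem:Q(T(p,2))homprops}, Corollary~\ref{cor:quandlesconsidered}, and Theorem~\ref{thm:hom}. The only point requiring mild care is to recognize that the repeated-basepoint pointed quandles appearing on the diagonal are exactly the quandles $\mathcal{Z}$ for which Theorem~\ref{thm:hom} was proved, so that its count of $c$ applies uniformly across all $n$ diagonal positions regardless of which residue $x_i$ is chosen.
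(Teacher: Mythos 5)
Your argument is correct and matches the paper's proof exactly: off-diagonal entries vanish by Corollary~\ref{cor:quandlesconsidered}, and diagonal entries equal $c$ by Theorem~\ref{thm:hom} applied to the repeated-basepoint pointed quandle $(\mathbb{Z}_n, x_i, x_i)$. Nothing further is needed.
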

\begin{proof}
    Let $(i,j)\in \mathbb Z_n\times \mathbb Z_n$. If $i=j$, then by Theorem~\ref{thm:hom}, $$\left(\Phi_{\mathbb Z_n}^{M_n}(\widetilde{\mathcal T(p,2)})\right)_{i,j}=\left(\Phi_{\mathbb Z_n}^{M_n}(\widetilde{\mathcal T(p,2)})\right)_{i,i}=\Phi_{(\mathbb Z_n,i,i)}^{\mathbb Z}(P(\widetilde{\mathcal T(p,2)})=|\hom(P(\widetilde{\mathcal T(p,2)}),(\mathbb Z_n,i,i))|=c,$$
    and if instead $i\ne j$, then by Corollary~\ref{cor:quandlesconsidered}, 
    $$\left(\Phi_{\mathbb Z_n}^{M_n}(\widetilde{\mathcal T(p,2)})\right)_{i,j}=\Phi_{(\mathbb Z_n,i,j)}^{\mathbb Z}(P(\widetilde{\mathcal T(p,2)})=|\hom(P(\widetilde{\mathcal T(p,2)}),(\mathbb Z_n,i,j))|=0.$$
\end{proof}

In $\mathcal Q_{\mathcal Z}(\widetilde{\mathcal T(p,2)})$, there is an arc from $\alpha\in V=\hom(P(\widetilde{\mathcal T(p,2)}),\mathcal Z)$ to $\beta\in V$ for each $\varphi\in \End(\mathcal Z)$ such that $\varphi\circ \alpha=\beta$, so we need some understanding of $\End(\mathcal Z)$.

\begin{theorem}\label{thm:dihedralends}
    Let $n\in\mathbb{N}$.
    For each $k\in \mathbb Z_n$, define $\varphi_k:\mathbb Z_n \to \mathbb Z_n$ by
    \begin{align*}
        \varphi_k(i)\equiv \begin{cases}
            y\bmod n &\text{ if } i\equiv y\bmod n,\\
            k\bmod n &\text{ if } i\equiv y+1\bmod n, \text{ and}\\
            \varphi_k(i-2)\tr\varphi_k(i-1)\bmod n &\text{ otherwise.}
        \end{cases}
    \end{align*}
    Then
    $\End(\mathcal Z)=\{\varphi_k\mid k\in \mathbb Z_n\}$ and hence $|\End(\mathcal Z)|=n$.
\end{theorem}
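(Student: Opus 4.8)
The plan is to prove the two set inclusions $\{\varphi_k\mid k\in\mathbb{Z}_n\}\subseteq\End(\mathcal{Z})$ and $\End(\mathcal{Z})\subseteq\{\varphi_k\mid k\in\mathbb{Z}_n\}$ separately, and then count. Since both basepoints of $\mathcal{Z}=(\mathbb{Z}_n,y,y)$ equal $y$, the pointed quandle endomorphisms of $\mathcal{Z}$ are exactly the quandle endomorphisms $\varphi$ of $\mathbb{Z}_n$ with $\varphi(y)=y$. The guiding observation, in the spirit of Lemma~\ref{lem:determininghoms}, is that in the dihedral quandle one has $(i-2)\tr(i-1)=2(i-1)-(i-2)=i$, so any quandle homomorphism $\varphi$ automatically satisfies $\varphi(i)=\varphi(i-2)\tr\varphi(i-1)$; hence $\varphi$ is completely determined by its values on any two consecutive residues.

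First I would verify that each $\varphi_k$ is a genuine pointed endomorphism. The cleanest route is to establish the closed form $\varphi_k(i)\equiv y+(i-y)(k-y)\bmod n$ by induction on the representative $j=i-y\in\{0,1,\dots,n-1\}$: the cases $j=0$ and $j=1$ are the first two branches of the definition, and the inductive step is exactly the recursion in the third branch. Once $\varphi_k$ is identified with this affine map, the quandle homomorphism property $\varphi_k(2b-a)\equiv 2\varphi_k(b)-\varphi_k(a)\bmod n$ is a one-line computation valid for all pairs $(a,b)$, and $\varphi_k(y)=y$ is immediate. This gives the inclusion $\{\varphi_k\}\subseteq\End(\mathcal{Z})$.

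For the reverse inclusion, I would take an arbitrary $\varphi\in\End(\mathcal{Z})$ and set $k:=\varphi(y+1)$. Because $\varphi(y)=y=\varphi_k(y)$ and $\varphi(y+1)=k=\varphi_k(y+1)$, and because both $\varphi$ and $\varphi_k$ obey the recursion $\phi(i)=\phi(i-2)\tr\phi(i-1)$ forced by the homomorphism property, a straightforward induction on $j=i-y$ counting up from $y$ (mirroring the argument of Lemma~\ref{lem:determininghoms}) shows $\varphi=\varphi_k$. Finally, the maps $\varphi_k$ are pairwise distinct since $\varphi_k(y+1)=k$, so $k\mapsto\varphi_k$ is a bijection from $\mathbb{Z}_n$ onto $\End(\mathcal{Z})$, giving $|\End(\mathcal{Z})|=n$.

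The main obstacle is the forward direction: the recursion in the definition of $\varphi_k$ only directly encodes the homomorphism relation on the consecutive pairs $(i-2,i-1)$, so it is not a priori clear that $\varphi_k$ respects $\tr$ on arbitrary pairs, nor that the recursion closes up consistently around the cycle $\mathbb{Z}_n$. Passing to the affine closed form dissolves both difficulties at once: it makes the homomorphism property transparent on all pairs, and the wrap-around consistency follows from $n(k-y)\equiv 0\bmod n$.
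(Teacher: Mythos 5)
Your proposal is correct, and it shares the paper's central idea --- that a quandle endomorphism of the dihedral quandle is determined by its values on the two consecutive elements $y$ and $y+1$, so the pointed endomorphisms fixing $y$ are classified by the single value at $y+1$. Where you differ is in how the forward inclusion $\{\varphi_k\}\subseteq\End(\mathcal Z)$ is justified. The paper first computes $|\End(\mathbb Z_n)|=n^2$ by arguing that $\{y,y+1\}$ generates $\mathbb Z_n$ and that ``any function $\varphi:\{y,y+1\}\to\mathbb Z_n$ extended linearly yields a quandle endomorphism,'' then cuts down to the pointed endomorphisms. That extension claim is the thinnest step in the paper's argument: generating sets of a quandle do not in general admit arbitrary extensions to homomorphisms (that would require freeness), so the assertion really rests on an unstated identification of $\End(\mathbb Z_n)$ with the affine maps. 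Your route makes exactly that identification explicit: you prove the closed form $\varphi_k(i)\equiv y+(i-y)(k-y)\bmod n$ by induction on $i-y$, after which the homomorphism identity $\varphi_k(2b-a)\equiv 2\varphi_k(b)-\varphi_k(a)$ is a direct computation valid for \emph{all} pairs, not just consecutive ones, and the wrap-around consistency of the recursion is automatic from $n(k-y)\equiv 0\bmod n$. This is slightly longer but more self-contained and rigorous; the paper's version is shorter and yields the count $|\End(\mathbb Z_n)|=n^2$ as a by-product, which is not needed elsewhere. Your reverse inclusion and the pairwise-distinctness count via $\varphi_k(y+1)=k$ match the paper's.
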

\begin{proof}

    First, let $x_1,x_2\in\mathbb Z_n$. By definition of modular addition, for some $i\in \mathbb Z_n$, $x_2\equiv x_1+i\bmod n$. Further, by definition of the dihedral quandle, for any $x\in\mathbb Z_n$, $x\tr (x+1)\equiv 2(x+1)-x\equiv x+2\bmod n$.
    Since $x_2\equiv x_1+i\bmod n$, from induction on the previous fact, it follows that $x_2$ is in the subquandle of $\mathbb Z_n$ generated by $\{x_1,x_1+1\}$. That is, any two consecutive elements of $\mathbb Z_n$ generate $\mathbb Z_n$.
    
    In particular, that means $\{y,y+1\}$ generates $\mathbb Z_n$, and any function $\varphi:\{y,y+1\}\to \mathbb Z_n$ extended linearly yields a quandle endomorphism of $\mathbb Z_n$. Since there are $n$ choices for both $\varphi(y)$ and $\varphi(y+1)$, $|\End(\mathbb Z_n)|\ge n^2$. Further, if $\varphi_1,\varphi_2\in \End(\mathbb Z_n)$ and $\varphi_1(y)\equiv \varphi_2(y)\bmod n$ and $\varphi_1(y+1)\equiv\varphi_2(y+1)\bmod n$, then, since $\varphi_1$ and $\varphi_2$ are homomorphisms and $\{y,y+1\}$ generates $\mathbb Z_n$, $\varphi_1=\varphi_2$.

    Thus, any $\varphi\in\End(\mathbb Z_n)$ is determined entirely by $\varphi(y)$ and $\varphi(y+1)$ and hence $|\End(\mathbb Z_n)|\le n^2$, so $|\End(\mathbb Z_n)|=n^2$.

    By definition, $\End(\mathcal Z)=\{\varphi\in \End(\mathbb Z_n)\mid \varphi(y)=y\}$. 
    Thus, each pointed endomorphism of $\mathcal Z$ is determined by its image of $y+1$ alone, and hence, for each $\varphi\in \End(\mathcal Z)$, $\varphi=\varphi_k$, where $k\equiv\varphi(y+1)\bmod n$ and $\varphi_k$ is defined as above. Moreover, for each $k\in\mathbb Z_n$, $\varphi_k\in \End(\mathcal Z)$ since each $\varphi_k(y)\equiv y\bmod n$. Thus, $\End(\mathcal Z)=\{\varphi_k\mid k\in\mathbb Z_n\}$.
\end{proof}

\begin{corollary}\label{cor:dihedralendsvalues}
    Let $n\in\mathbb{N}$.
    Then for each $\varphi_k\in \End(\mathcal Z)$ and $i\in\mathbb Z_n$, $\varphi_k(y+i)\equiv ik-(i-1)y \bmod n$.
\end{corollary}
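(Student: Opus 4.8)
The plan is to prove the closed form by induction on the offset $i$, ranging over $i \in \{0, 1, \dots, n-1\}$, using the recursive definition of $\varphi_k$ supplied by Theorem~\ref{thm:dihedralends}. First I would dispatch the two base cases directly from the first two branches of that definition: when $i = 0$ we have $\varphi_k(y) \equiv y \bmod n$, which agrees with $0\cdot k - (0-1)y \equiv y \bmod n$; and when $i = 1$ we have $\varphi_k(y+1) \equiv k \bmod n$, which agrees with $1\cdot k - (1-1)y \equiv k \bmod n$. These two values anchor the induction.

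For the inductive step, fix $i \ge 2$ and assume the formula holds at $i-1$ and $i-2$. Since $y+i$ is congruent to neither $y$ nor $y+1$ in the relevant range, the third branch of the definition applies, giving $\varphi_k(y+i) \equiv \varphi_k(y+i-2) \tr \varphi_k(y+i-1) \bmod n$. Expanding the dihedral operation $a \tr b \equiv 2b - a \bmod n$ converts this into $\varphi_k(y+i) \equiv 2\varphi_k(y+i-1) - \varphi_k(y+i-2) \bmod n$. Substituting the induction hypotheses $\varphi_k(y+i-1) \equiv (i-1)k - (i-2)y$ and $\varphi_k(y+i-2) \equiv (i-2)k - (i-3)y$, then collecting the coefficients of $k$ and of $y$, yields coefficient $2(i-1)-(i-2) = i$ on $k$ and coefficient $-2(i-2)+(i-3) = -(i-1)$ on $y$, so $\varphi_k(y+i) \equiv ik - (i-1)y \bmod n$, which closes the induction.

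The only point requiring care is that $\varphi_k$ is a genuine well-defined function on all of $\mathbb{Z}_n$, in particular that the recursion remains consistent when the index wraps from $y+(n-1)$ back to $y$. This will not be an obstacle here, because Theorem~\ref{thm:dihedralends} already establishes that each $\varphi_k$ lies in $\End(\mathcal Z)$ and that any such endomorphism is determined by its values on the generating pair $\{y,y+1\}$ extended linearly; the present corollary merely records the resulting closed form along the cycle $y, y+1, \dots, y+(n-1)$. I therefore expect the argument to be a routine two-step induction whose only computation is the coefficient bookkeeping in the inductive step, and that bookkeeping simplifies cleanly.
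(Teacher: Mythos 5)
Your proof is correct and follows essentially the same route as the paper's: a two-step induction anchored at $i=0$ and $i=1$, using the recursive branch $\varphi_k(y+i)\equiv\varphi_k(y+i-2)\tr\varphi_k(y+i-1)$ and the dihedral identity $a\tr b\equiv 2b-a$. If anything, your version is slightly more careful, since you state both base cases explicitly where the paper only names $i=1$.
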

\begin{proof}
    We use strong induction. Our base case is $i=1$. By definition of $\varphi_k$, $\varphi_k(y+1)\equiv k\equiv 1 k-(1-1)y\bmod n$.

    Now assume that for some $j\in \mathbb Z_n$, for all $i\in\{y,y+1,\dots, y+j\}$, $\varphi_k(y+i)\equiv ik-(i-1)y\bmod n$. Then in particular, $\varphi_k(y+(j-1))\equiv (j-1)k-((j-1)-1)y\bmod n$ and $\varphi_k(y+j)\equiv jk-(j-1)y\bmod n$. Then by definition of $\varphi_k$, 
    \begin{align*}
        \varphi_k(y+(j+1)) &\equiv  \varphi_k(y+(j-1))\tr\varphi_k(y+j)\\
        &\equiv((j-1)k-((j-1)-1)y)\tr ( jk-(j-1)y)\\
        &\equiv 2( jk-(j-1)y)-((j-1)k-((j-1)-1)y)\\
        &\equiv (j+1)k - ((j+1)-1)y\bmod n.
    \end{align*}
\end{proof}

To coherently describe $\mathcal Q_{\mathcal Z}(\widetilde{\mathcal T(p,2)})$, we need some notation to denote certain types of directed multigraphs.
Recall from Definition~\ref{def:multigraph} that a directed multigraph is an ordered pair $G=(V, w)$, where $V$ is a finite set and $w:V\times V\to \mathbb N\cup\{0\}$ is a function. 

\begin{definition}\label{def:Knk}
    We say a directed multigraph $G$ is a \emph{complete $k$-regular directed multigraph on $n$ vertices} if $G\cong K_{n,k}=(\{1,2,\dots n\}, w)$, where, for all $u,v\in \{1,2,\dots,n\}$, $w(u,v)=k$. 
\end{definition}

That is, a directed multigraph $G$ is a complete $k$-regular directed multigraph on $n$ vertices if $|V(G)|=n$ and there are $k$ arcs from $u$ to $v$ for all $u,v\in V(G)$.

\begin{definition}
    Let $G=(V(G),w_G)$ and $H=(V(H),w_H)$ be two directed multigraphs such that $V(G)\cap V(H)=\emptyset$. Then the \emph{$n$-directed join of $G$ to $H$} is the directed multigraph $G \overrightarrow{\triangledown_n} H=(V, w)$, where $V=V(G)\cup V(H)$ and $w(u,v):(V(G)\cup V(H))\times (V(G)\cup V(H))\to \mathbb N\cup \{0\}$ is defined by 
    \begin{align*}
        w(u,v) = \begin{cases}
            w_G(u,v) &\text{ if } u,v\in V(G)\\
            w_H(u,v) &\text{ if } u,v\in V(H)\\
            n &\text{ if } u\in V(G) \text{ and } v\in V(H), \text{ and }\\
            0 &\text{ if } u\in V(H) \text{ and } v\in V(G).
        \end{cases}
    \end{align*}
    Note that the subgraphs of $G\overrightarrow{\triangledown_n}H$ induced by $V(G)$ and $V(H)$ are $G$ and $H$, respectively.
\end{definition}

\begin{example}\label{ex:graphexample}
    Let $G\cong K_{2,1}$ and $H\cong K_{4, 1}$ such that $V(G)\cap V(H)=\emptyset$. Then $G\overrightarrow{\triangledown_1} H$ has six vertices, two in $V(G)$ and four in $V(H)$. In Figure~\ref{fig:graphexample}, the two vertices on the left are those of $V(G)$ and the four arranged in a square on the right are those of $V(H)$.

    \begin{figure}
        \centering
        \includegraphics[width=0.5\linewidth]{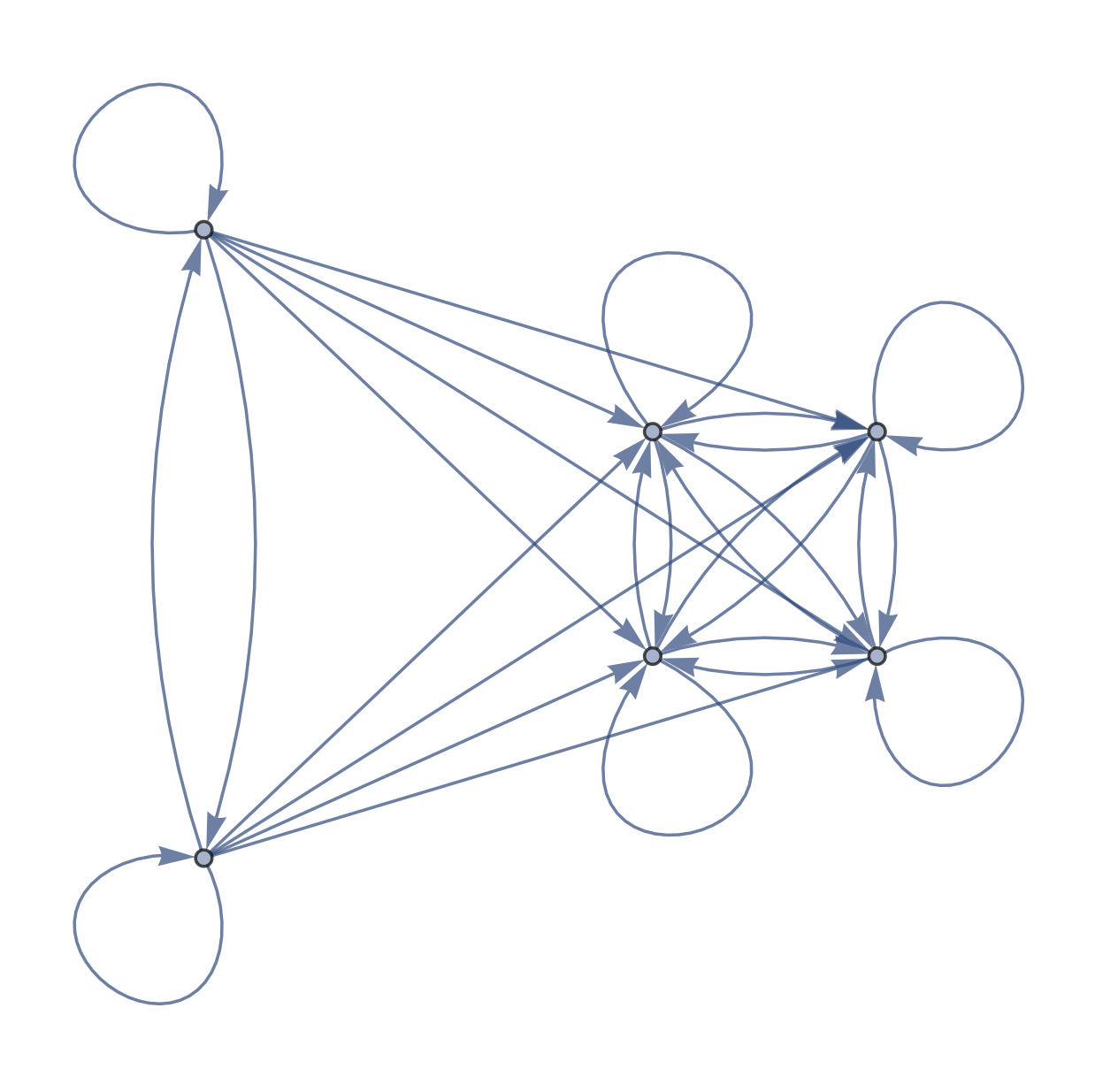}
        \caption{The directed multigraph $G\overrightarrow{\triangledown_1} H$ from Example~\ref{ex:graphexample}.}
        \label{fig:graphexample}
    \end{figure}
\end{example}

\begin{theorem}\label{thm:quivergcd1}
    Let $p,n\in\mathbb{N}$.
    If $\gcd(p,n)=1$, then $\mathcal Q_{\mathcal Z}(\widetilde{\mathcal T(p,2)})\cong K_{1,n}$.
\end{theorem}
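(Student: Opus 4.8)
The plan is to determine the vertex set and the loop multiplicity of the quiver separately, then match the result against Definition~\ref{def:Knk}. For the vertices, I would apply Theorem~\ref{thm:hom}: when $\gcd(p,n)=1$ we have $c=1$, so the coloring set $\hom(P(\widetilde{\mathcal T(p,2)}),\mathcal Z)=\mathcal A$ reduces to the singleton $\{\alpha_0\}$. Hence the quiver $\mathcal Q_{\mathcal Z}(\widetilde{\mathcal T(p,2)})$ has exactly one vertex, and it only remains to count the arcs from $\alpha_0$ to itself.

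For the arcs, the key observation is that every $\varphi\in\End(\mathcal Z)$ is a pointed quandle endomorphism, so the composite $\varphi\circ\alpha_0$ is again a pointed quandle homomorphism $P(\widetilde{\mathcal T(p,2)})\to\mathcal Z$ (the basepoints are preserved since $\alpha_0(x_1)=\alpha_0(x_{p+1})=y$ and $\varphi(y)=y$). Therefore $\varphi\circ\alpha_0$ lands in the coloring set, which is the singleton $\{\alpha_0\}$, forcing $\varphi\circ\alpha_0=\alpha_0$ for every $\varphi\in\End(\mathcal Z)$. Thus each endomorphism of $\mathcal Z$ contributes a single loop at $\alpha_0$, and these are the only arcs present.

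Finally I would invoke Theorem~\ref{thm:dihedralends}, which gives $|\End(\mathcal Z)|=n$, so the loop multiplicity is $w(\alpha_0,\alpha_0)=n$. Consequently $\mathcal Q_{\mathcal Z}(\widetilde{\mathcal T(p,2)})$ is the directed multigraph with one vertex and $n$ loops at that vertex, which is precisely $K_{1,n}$ by Definition~\ref{def:Knk}. The argument presents essentially no obstacle; the only point requiring a moment's care is confirming that $\varphi\circ\alpha_0$ really does lie in the coloring set, which follows at once from the fact that pointed endomorphisms of $\mathcal Z$ fix the repeated basepoint $y$.
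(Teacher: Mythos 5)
Your proposal is correct and follows the same overall route as the paper: use Theorem~\ref{thm:hom} with $c=1$ to reduce the vertex set to the single trivial coloring $\alpha_0$, then count $n$ loops via $|\End(\mathcal Z)|=n$ from Theorem~\ref{thm:dihedralends}. The one small difference is in how you show $\varphi\circ\alpha_0=\alpha_0$: the paper verifies this concretely by checking agreement on $x_2$ and $x_{p+1}$ and invoking Lemma~\ref{lem:determininghoms}, whereas you observe that the composite of pointed homomorphisms is a pointed homomorphism and so must land in the singleton coloring set --- a slightly slicker argument that is equally valid.
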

\begin{proof}
    Assume $\gcd(p,n)=1$. Then by Theorem~\ref{thm:hom}, $\hom(P(\widetilde{\mathcal T(p,2)}),\mathcal Z)=\{\alpha_0\}$, where $\alpha_0$ is the trivial coloring. Hence, by definition, $V(\mathcal Q_{\mathcal Z}(\widetilde{\mathcal T(p,2)}))=\{\alpha_0\}$, and thus any arc in $\mathcal Q_{\mathcal Z}(\widetilde{\mathcal T(p,2)})$ is a loop at $\alpha_0$.

    Let $\varphi_k\in\End(\mathcal Z)$. By Lemma~\ref{lem:determininghoms}, $\varphi_k\circ\alpha_0$ is completely determined by $\varphi_k\circ\alpha_0(x_2)$ and $\varphi_k\circ\alpha_0(x_{p+1})$.
    Since $\alpha_0$ is the trivial coloring, $\alpha_0(x_2)\equiv\alpha_0(x_{p+1})\equiv y\bmod n$, and
    by Theorem~\ref{thm:dihedralends}, $\varphi_k(y)\equiv y\bmod n$. Hence, $\varphi_k\circ\alpha_0(x_2)\equiv \varphi_k(y)\equiv y\equiv \alpha_0(x_2)\bmod n$ and $\varphi_k\circ\alpha_0(x_{p+1})\equiv \varphi_k(y)\equiv y\equiv\alpha_0(x_{p+1})\bmod n$, so by Lemma~\ref{lem:determininghoms}, $\varphi_k\circ \alpha_0=\alpha_0$. 
    
    Hence, for all $\varphi_k\in\End(\mathcal Z)$, $\varphi_k\circ\alpha_0=\alpha_0$, so there are $n$ loops at $\alpha_0$ in $\mathcal Q_{\mathcal Z}(\widetilde{\mathcal T(p,2)})$. Thus, $\mathcal Q_{\mathcal Z}(\widetilde{\mathcal T(p,2)})\cong K_{1,n}$.
\end{proof}

\begin{lemma}\label{lem:composingendhoms}
    Let $p,n\in\mathbb{N}$.
    If  $c=\gcd(p,n)$ is prime, then for all $\varphi_i,\varphi_j\in \End(\mathcal Z)$ and $\alpha_k\in\hom(P(\widetilde{\mathcal T(p,2)}),\mathcal Z)$ such that $\alpha_k$ is nontrivial, $\varphi_i\circ \alpha_k=\varphi_j\circ\alpha_k$ if and only if $i\equiv j\bmod c$.
\end{lemma}
\begin{proof}
    Let $\varphi_i,\varphi_j\in \End(\mathcal Z)$ and let $\alpha_k\in \hom(P(\widetilde{\mathcal T(p,2)}),\mathcal Z)$ be nontrivial. Since $\alpha_k$ is nontrivial, $k\in\{1,\dots,c-1\}$.

    Suppose $\varphi_i\circ\alpha_k=\varphi_j\circ\alpha_k$. 
    Then by Theorem~\ref{thm:hom}, for some $l\in \{0,1,\dots,c-1\}$, $\varphi_i\circ\alpha_k=\varphi_j\circ\alpha_k=\alpha_l$,
    $\alpha_k(x_2)\equiv y+kd\bmod n$, and $\alpha_l(x_2)\equiv y+ld\bmod n$, where $d=\frac{n}{c}$. Moreover, since $\varphi_i\circ\alpha_k=\alpha_l$ and $\varphi_j\circ\alpha_k=\alpha_l$, we have
    $$\varphi_i\circ\alpha_k(x_2)\equiv \alpha_l(x_2)\bmod n$$ and $$\varphi_j\circ\alpha_k(x_2)\equiv \alpha_l(x_2)\bmod n.$$
    Thus, $\varphi_i(y+kd)\equiv y+ld\bmod n$ and $\varphi_j(y+kd)\equiv y+ld\bmod n$. On the other hand, by Corollary~\ref{cor:dihedralendsvalues}, $\varphi_i(y+kd)\equiv kdi-(kd-1)y\bmod n$ and $\varphi_j(y+kd)\equiv kdj-(kd-1)y\bmod n$. 
    Hence, 
    $$kdi-(kd-1)y\equiv y+ld\bmod n \text{ and } kdj-(kd-1)y\equiv y+ld\bmod n.$$
    Distributing,
    $$kdi-kdy+y\equiv y+ld\bmod n \text{ and } kdj-kdy+y\equiv y+ld\bmod n,$$
    so
    $$kdi-kdy\equiv ld\bmod n \text{ and } kdj-kdy\equiv ld\bmod n.$$
    Equivalently, since $\tfrac{n}{d}=c$, 
    $$k(i-y)\equiv l\bmod c \text{ and } k(j-y)\equiv l\bmod c.$$
    Since $c$ is prime and $1\le k\le c-1$, $\gcd(k,c)=1$, so the equivalence $k(x-y)\equiv l\bmod c$ has a unique solution. Thus, $i\equiv j\bmod c$.

    Conversely, assume $i\equiv j\bmod c$. Then 
    $$k(i-y)\equiv k(j-y)\bmod c.$$ 
    Equivalently, since $c=\frac{n}{d}$, 
    $$kd(i-y)\equiv kd(j-y)\bmod n.$$ 
    Distributing and adding $y$ to both sides, we see
    $$kdi-kdy+y\equiv kdj-kdj-kdy+y\bmod n.$$ 
    Factoring,
    $$kdi-(kd-1)y\equiv kdj-(kd-1)y\bmod n.$$
    By Corollary~\ref{cor:dihedralendsvalues}, $\varphi_i(y+kd)\equiv kdi-(kd-1)y\bmod n$ and $\varphi_j(y+kd)\equiv kdj-(kd-1)y\bmod n$. Thus, since $\alpha_k(x_2)\equiv y+kd$, we have $$\varphi_i\circ\alpha_k(x_2)\equiv\varphi_j\circ\alpha_k(x_2)\bmod n.$$
     Additionally, since $\alpha_k$ is a valid $\mathcal{Z}$-coloring of $\widetilde{\mathcal T(p,2)}$, we have $\alpha_k(x_{p+1}) = y$. Thus, 
    $$\varphi_i\circ\alpha_k(x_{p+1})\equiv\varphi_j\circ\alpha_k(x_{p+1}) \equiv y\bmod n.$$
    Hence, by Lemma~\ref{lem:determininghoms}, $\varphi_i\circ\alpha_k=\varphi_j\circ\alpha_k$.
\end{proof}

\begin{theorem}\label{thm:quivergcdprime}
    Let $p,n\in\mathbb{N}$ with $\gcd(p,n)=c$, and $d=\tfrac{n}{c}$. 
    Define $G\cong K_{c-1,d}$ and $H\cong K_{1,n}$ such that $V(G)\cap V(H)=\emptyset$. 
    If $c$ is prime, then $\mathcal Q_{\mathcal Z}(\widetilde{\mathcal T(p,2)})\cong G\overrightarrow{\triangledown_d} H$.
\end{theorem}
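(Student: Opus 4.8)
The plan is to identify the vertex set and then compute every arc multiplicity $w(\alpha,\beta)=|\{\varphi\in\End(\mathcal Z)\mid\varphi\circ\alpha=\beta\}|$ directly, checking that the resulting weight function agrees with that of $G\overrightarrow{\triangledown_d}H$. By Theorem~\ref{thm:hom} the vertex set is $V=\mathcal A=\{\alpha_0,\alpha_1,\dots,\alpha_{c-1}\}$, consisting of exactly $c$ colorings, where $\alpha_0$ is the trivial coloring and $\alpha_1,\dots,\alpha_{c-1}$ are nontrivial. I would assign $\alpha_0$ to the single vertex of $H\cong K_{1,n}$ and $\alpha_1,\dots,\alpha_{c-1}$ to the $c-1$ vertices of $G\cong K_{c-1,d}$. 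By Theorem~\ref{thm:dihedralends}, $|\End(\mathcal Z)|=n$, so every vertex has exactly $n$ outgoing arcs in total.

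First I would dispose of the trivial vertex. Exactly as in the proof of Theorem~\ref{thm:quivergcd1}, every $\varphi_k\in\End(\mathcal Z)$ fixes $\alpha_0$: since $\alpha_0(x_i)\equiv y\bmod n$ for all $i$ and $\varphi_k(y)\equiv y\bmod n$, Lemma~\ref{lem:determininghoms} gives $\varphi_k\circ\alpha_0=\alpha_0$. Hence there are $n$ loops at $\alpha_0$ and no arcs from $\alpha_0$ to any nontrivial coloring. This simultaneously produces the $K_{1,n}$ subgraph $H$ and the requirement in the join that there be $0$ arcs from $H$ into $G$.

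Next, for a nontrivial coloring $\alpha_k$ with $1\le k\le c-1$, consider the map $\mathbb Z_n\to\mathcal A$ sending $i\mapsto\varphi_i\circ\alpha_k$. By Lemma~\ref{lem:composingendhoms}, $\varphi_i\circ\alpha_k=\varphi_j\circ\alpha_k$ if and only if $i\equiv j\bmod c$, so this map factors through $\mathbb Z_c$ and is injective there. It therefore has exactly $c$ distinct values, each attained by exactly $d=\tfrac nc$ of the endomorphisms. Since each value is a composition of pointed quandle homomorphisms and thus lies in $\mathcal A$, and $|\mathcal A|=c$, the $c$ distinct images must exhaust $\mathcal A$. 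Consequently $w(\alpha_k,\beta)=d$ for every $\beta\in V$; that is, from each nontrivial vertex there are precisely $d$ arcs to each of the $c$ vertices.

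Finally I would assemble the weights. The above yields $w(\alpha_0,\alpha_0)=n$, $w(\alpha_0,\alpha_j)=0$ for nontrivial $\alpha_j$, $w(\alpha_i,\alpha_0)=d$ for nontrivial $\alpha_i$, and $w(\alpha_i,\alpha_j)=d$ for all nontrivial $\alpha_i,\alpha_j$ (including $i=j$); as a sanity check, the outgoing arcs from any nontrivial vertex total $cd=n$. These are exactly the weights of $G\overrightarrow{\triangledown_d}H$ under the stated vertex identification, giving the isomorphism. The only substantive step is the nontrivial-vertex computation, and even there the hard counting is already carried out in Lemma~\ref{lem:composingendhoms} (which is where primality of $c$ enters, via $\gcd(k,c)=1$ for $1\le k\le c-1$). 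What remains is the pigeonhole observation that $c$ distinct images among only $c$ available colorings must be all of $\mathcal A$; I expect that surjectivity step to be the one point worth stating carefully.
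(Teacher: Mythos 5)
Your proposal is correct and follows essentially the same route as the paper: identify the vertex set via Theorem~\ref{thm:hom}, handle $\alpha_0$ exactly as in Theorem~\ref{thm:quivergcd1}, and use Lemma~\ref{lem:composingendhoms} to show each nontrivial vertex sends exactly $d$ arcs to every vertex. The only (harmless) difference is in the final surjectivity step, where you use injectivity of $i\mapsto\varphi_i\circ\alpha_k$ on $\mathbb Z_c$ plus pigeonhole, while the paper argues by contradiction that a missed target would leave at most $(c-1)d<n$ outgoing arcs.
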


\begin{proof}
    By definition of the full pointed quandle coloring quiver and by Theorem~\ref{thm:hom}, $$V(\mathcal Q_{\mathcal Z}(\widetilde{\mathcal T(p,2)}))=\hom(P(\widetilde{\mathcal T(p,2)}),\mathcal Z)=\{\alpha_i\in \hom(P(\widetilde{\mathcal T(p,2)}),\mathcal Z)\mid i\in\{0,1,\dots,c-1\}\},$$
    where $\alpha_0$ is the trivial coloring and, for all $i\in\{1,\dots,c-1\}$, $\alpha_i$ is nontrivial. As was shown in the proof of Theorem~\ref{thm:quivergcd1}, for all $\varphi_k\in\End(\mathcal Z)$, $\varphi_k\circ\alpha_0=\alpha_0$, so since $|\End(\mathcal Z)|=n$ by Theorem~\ref{thm:dihedralends}, the subgraph of $\mathcal Q_{\mathcal Z}(\widetilde{\mathcal T(p,2)})$ induced by $\{\alpha_0\}$ is isomorphic to $K_{1,n}$.

    Now let $\alpha_k\in V(\mathcal Q_{\mathcal Z}(\widetilde{\mathcal T(p,2)}))=\hom(P(\widetilde{\mathcal T(p,2)}),\mathcal Z)$ such that $\alpha_k\ne \alpha_0$. 
    Since $|\End(\mathcal Z)|=n$, in $\mathcal Q_{\mathcal Z}(\widetilde{\mathcal T(p,2)})$ there are $n$ arcs originating at $\alpha_k$. 
    Suppose $\varphi_i\in\End(\mathcal Z)$ and $\alpha_l\in\hom(P(\widetilde{\mathcal T(p,2)},\mathcal Z)$ such that $\varphi_i\circ\alpha_k=\alpha_l$. Then by Lemma~\ref{lem:composingendhoms}, for all $\varphi_j\in \End(\mathcal Z)$, $\varphi_j\circ \alpha_k=\alpha_l$ if and only if
    $\varphi_j\in \{\varphi_j\in \End(\mathcal Z)\mid j\equiv i\bmod c\}$. 
    Since $n=cd$ and $c$ is prime, for any fixed $i\in\mathbb Z_n$, there are $d$ distinct solutions in $\mathbb Z_n$ to $j\equiv i\bmod c$, so $|\{\varphi_j\in \End(\mathcal Z)\mid j\equiv i\bmod c\}|=d$.
    Now suppose instead that there exists $\alpha_l\in \hom(P(\widetilde{\mathcal T(p,2)},\mathcal Z)$ such that there does not exist $\varphi_i\in\End(\mathcal Z)$ such that $\varphi_i\circ\alpha_k=\alpha_l$, so in $\mathcal Q_{\mathcal Z}(\widetilde{\mathcal T(p,2)})$, $\alpha_l$ has no incoming arcs originating at $\alpha_k$. Then since there are $c-1$ other homomorphisms in $\hom(P(\widetilde{\mathcal T(p,2)},\mathcal Z)$, each of which with at most $d$ incoming arcs originating at $\alpha_k$, the total number of arcs originating at $\alpha_k$ is at most $(c-1)d<cd=n$, which contradicts there being $n$ arcs originating at $\alpha_k$. 
    Thus, for all $\alpha_i\in V(\mathcal Q_{\mathcal Z}(\widetilde{\mathcal T(p,2)}))= \hom(P(\widetilde{\mathcal T(p,2)},\mathcal Z)$, there are $d$ arcs from $\alpha_k$ to $\alpha_l$. 

    This means that the subgraph induced by the $c-1$ nontrivial homomorphisms is isomorphic to $K_{c-1,d}$, and for each nontrivial $\alpha_k\in\hom(P(\widetilde{\mathcal T(p,2)},\mathcal Z)$, there are $d$ arcs from $\alpha_k$ to $\alpha_0$. Thus, $\mathcal Q_{\mathcal Z}(\widetilde{\mathcal T(p,2)})\cong G\overrightarrow{\triangledown_d} H$.
\end{proof}

\begin{corollary}\label{cor:torusIndegreeMatrices}
    Let $p,n\in\mathbb{N}$ with $\gcd(p,n)=c$ and $d=\tfrac{n}{c}$. Then 
    \begin{enumerate}
        \item if $c=1$, $\Phi_{\mathbb{Z}_n}^{M_n,\textup{deg+}}(\widetilde{\mathcal{T}(p,2)})=(u^c)I_n$, and
        \item if $c$ is prime, $\Phi_{\mathbb{Z}_n}^{M_n,\textup{deg+}}(\widetilde{\mathcal{T}(p,2)})=(u^{n+(c-1)d}+(c-1)u^{(c-1)d})I_n$.
    \end{enumerate}
\end{corollary}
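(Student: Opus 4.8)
The plan is to reduce the matrix to its diagonal and then read each diagonal entry off the quiver isomorphism types already determined in Theorems~\ref{thm:quivergcd1} and~\ref{thm:quivergcdprime}. First I would dispose of the off-diagonal entries: by definition the $(i,j)$-entry of $\Phi_{\mathbb{Z}_n}^{M_n,\textup{deg}^+}(\widetilde{\mathcal{T}(p,2)})$ is the full in-degree quiver polynomial $\Phi_{(\mathbb{Z}_n,i,j)}^{\textup{deg}^+}(\widetilde{\mathcal{T}(p,2)})=\sum_{f\in V}u^{\deg^+(f)}$ summed over the vertex set $V=\hom(P(\widetilde{\mathcal{T}(p,2)}),(\mathbb{Z}_n,i,j))$. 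When $i\neq j$, Corollary~\ref{cor:quandlesconsidered} gives $V=\emptyset$, so this entry is the empty sum $0$; hence the matrix is diagonal and only the entries with $i=j$ remain.

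Next I would argue that all diagonal entries coincide. The $(i,i)$-entry is the in-degree quiver polynomial of $\mathcal{Q}_{\mathcal{Z}}(\widetilde{\mathcal{T}(p,2)})$ with $\mathcal{Z}=(\mathbb{Z}_n,i,i)$. The isomorphism types in Theorems~\ref{thm:quivergcd1} and~\ref{thm:quivergcdprime} depend only on $p$, $n$, $c$, and $d$, not on the repeated basepoint $y=i$; since an isomorphism of directed multigraphs preserves in-degrees and hence the multiset $\{\deg^+(f)\}$, the polynomial $\sum_{f\in V}u^{\deg^+(f)}$ is an isomorphism invariant. Therefore every diagonal entry equals one fixed polynomial $P(u)$, so the matrix is $P(u)I_n$, and it remains to evaluate $P(u)$ from the explicit quiver in each case.

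For $c=1$, Theorem~\ref{thm:quivergcd1} identifies the quiver with $K_{1,n}$: a single vertex carrying $n$ loops, each contributing $1$ to its in-degree, so the unique vertex has in-degree $n$ and $P(u)=u^{n}$, the natural $c=1$ specialization of the formula in part~(2) (where $c-1=0$). For $c$ prime, Theorem~\ref{thm:quivergcdprime} gives $\mathcal{Q}_{\mathcal{Z}}(\widetilde{\mathcal{T}(p,2)})\cong G\overrightarrow{\triangledown_d}H$ with $G\cong K_{c-1,d}$ and $H\cong K_{1,n}$, and here the substantive step is the in-degree count inside the directed join. The unique $H$-vertex (the trivial coloring $\alpha_0$) receives its $n$ internal loops together with $d$ arcs from each of the $c-1$ vertices of $G$, for in-degree $n+(c-1)d$; each vertex of $G$ receives $d$ arcs from every vertex of $G$ (itself included) and no arc from $H$, for in-degree $(c-1)d$. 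Summing $u^{\deg^+}$ over the $c$ vertices yields $P(u)=u^{\,n+(c-1)d}+(c-1)u^{\,(c-1)d}$.

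The step I expect to be the main obstacle is precisely this last in-degree count: one must carefully separate the three sources of incoming arcs (intra-$G$, intra-$H$, and the cross arcs $V(G)\to V(H)$) and, crucially, use the asymmetry built into $\overrightarrow{\triangledown_d}$, which sends no arc $V(H)\to V(G)$, so that the nontrivial colorings retain the smaller in-degree $(c-1)d$ rather than inheriting any arcs from $\alpha_0$. Everything else is routine bookkeeping given the structural theorems already proved.
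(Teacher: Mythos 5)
Your argument is correct and is essentially the derivation the paper leaves implicit (the corollary is stated without an explicit proof): the off-diagonal entries vanish by Corollary~\ref{cor:quandlesconsidered} since the empty vertex set gives the empty sum, the diagonal entries are independent of the basepoint because the quiver's isomorphism type in Theorems~\ref{thm:quivergcd1} and~\ref{thm:quivergcdprime} depends only on $p,n,c,d$ and the in-degree polynomial is a multigraph-isomorphism invariant, and the in-degrees are then read off $K_{1,n}$ and $G\overrightarrow{\triangledown_d}H$. Your bookkeeping in the prime case is right: $\deg^+(\alpha_0)=n+(c-1)d$ from the $n$ loops together with $d$ arcs from each of the $c-1$ vertices of $G$, while each nontrivial coloring has $\deg^+=(c-1)d$ because the directed join contributes no arcs from $V(H)$ to $V(G)$; summing gives $u^{n+(c-1)d}+(c-1)u^{(c-1)d}$, matching part~(2) and the paper's $\widetilde{\mathcal T(10,2)}$ example. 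The one point you should not pass over silently is part~(1): your computation yields $u^{n}$ (a single vertex carrying one loop for each of the $n$ elements of $\End(\mathcal Z)$ guaranteed by Theorem~\ref{thm:dihedralends}), whereas the statement as printed asserts $(u^{c})I_n=(u^{1})I_n$, and these agree only when $n=1$. Your value $u^{n}$ is the one forced by Theorem~\ref{thm:quivergcd1}, and it is exactly the $c=1$ specialization of the formula in part~(2), so the exponent $c$ in part~(1) appears to be a typo for $n$; a complete write-up must either flag that discrepancy explicitly or it is proving a statement different from the one asserted.
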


\begin{example}
    We consider the $\widetilde{\mathcal T(10,2)}$ torus linkoid and some of its invariants with respect to $\mathbb{Z}_5$. 
    Note that $c=\gcd(10,5)=5$ and $d=\frac{5}{5}=1$.
    Hence, by Corollary~\ref{cor:ZnUnpointedEnds}, \[\Phi_{\mathbb{Z}_5}^{M_5}(\widetilde{\mathcal{T}(10,2)})=
    \begin{bmatrix}
        5 & 0 & 0 & 0 & 0 \\
        0 & 5 & 0 & 0 & 0 \\
        0 & 0 & 5 & 0 & 0 \\
        0 & 0 & 0 & 5 & 0 \\
        0 & 0 & 0 & 0 & 5
    \end{bmatrix},\]
    and by Corollary~\ref{cor:torusIndegreeMatrices},
    \[
    \Phi_{\mathbb{Z}_5}^{M_5,\textup{deg+}}(\widetilde{\mathcal{T}(10,2)})
    = \begin{bmatrix}
        u^9+4u^4 & 0 & 0 & 0 & 0 \\
        0 & u^9+4u^4 & 0 & 0 & 0 \\
        0 & 0 & u^9+4u^4 & 0 & 0 \\
        0 & 0 & 0 & u^9+4u^4 & 0 \\
        0 & 0 & 0 & 0 & u^9+4u^4
    \end{bmatrix}.
    \]
    Now we fix $y=0\in \mathbb{Z}_n$. 
    By Theorem 6.6, we have $5$ $\mathcal{Z}$-colorings of $\widetilde{\mathcal{T}(10,2)}$:
    \[
    \begin{tabular}{l|ccccccccccc}
     & $x_1$ & $x_2$ & $x_3$ & $x_4$ & $x_5$ & $x_6$ & $x_7$ & $x_8$ & $x_9$ & $x_{10}$ & $x_{11}$ \\
    \hline
    $\alpha_0$ & $0$ & $0$ & $0$ & $0$ & $0$ & $0$ & $0$ & $0$ & $0$ & $0$ & $0$ \\
    $\alpha_1$ & $0$ & $1$ & $2$ & $3$ & $4$ & $0$ & $1$ & $2$ & $3$ & $4$ & $0$ \\
    $\alpha_2$ & $0$ & $2$ & $4$ & $1$ & $3$ & $0$ & $2$ & $4$ & $1$ & $3$ & $0$ \\
    $\alpha_3$ & $0$ & $3$ & $1$ & $4$ & $2$ & $0$ & $3$ & $1$ & $4$ & $2$ & $0$ \\
    $\alpha_4$ & $0$ & $4$ & $3$ & $2$ & $1$ & $0$ & $4$ & $3$ & $2$ & $1$ & $0$. 
    \end{tabular}
    \]
    By Theorem~\ref{thm:dihedralends}, there are $5$ endomorphisms of $\mathcal{Z}$:
    \[
    \begin{tabular}{l|ccccc}
      & $0$ & $1$ & $2$ & $3$ & $4$ \\
    \hline
    $\varphi_0$ & $0$ & $0$ & $0$ & $0$ & $0$ \\
    $\varphi_1$ & $0$ & $1$ & $2$ & $3$ & $4$ \\
    $\varphi_2$ & $0$ & $2$ & $4$ & $1$ & $3$ \\
    $\varphi_3$ & $0$ & $3$ & $1$ & $4$ & $2$ \\
    $\varphi_4$ & $0$ & $4$ & $3$ & $2$ & $1$.
\end{tabular}
\]
    Lastly, let $G\cong K_{4,1}$ and $H\cong K_{1,5}$ such that $V(G)\cap V(H)=\emptyset$. Then by Theorem~\ref{thm:quivergcdprime}, $\mathcal{Q}_{\mathcal{Z}}(\widetilde{\mathcal{T}(10,2)})\cong G\overrightarrow{\triangledown_1} H$. Figure~\ref{fig:quiverex} is a drawing of $\mathcal{Q}_{\mathcal{Z}}(\widetilde{\mathcal{T}(10,2)})$.

    \begin{figure}
        \centering
        \includegraphics[width=0.5\linewidth]{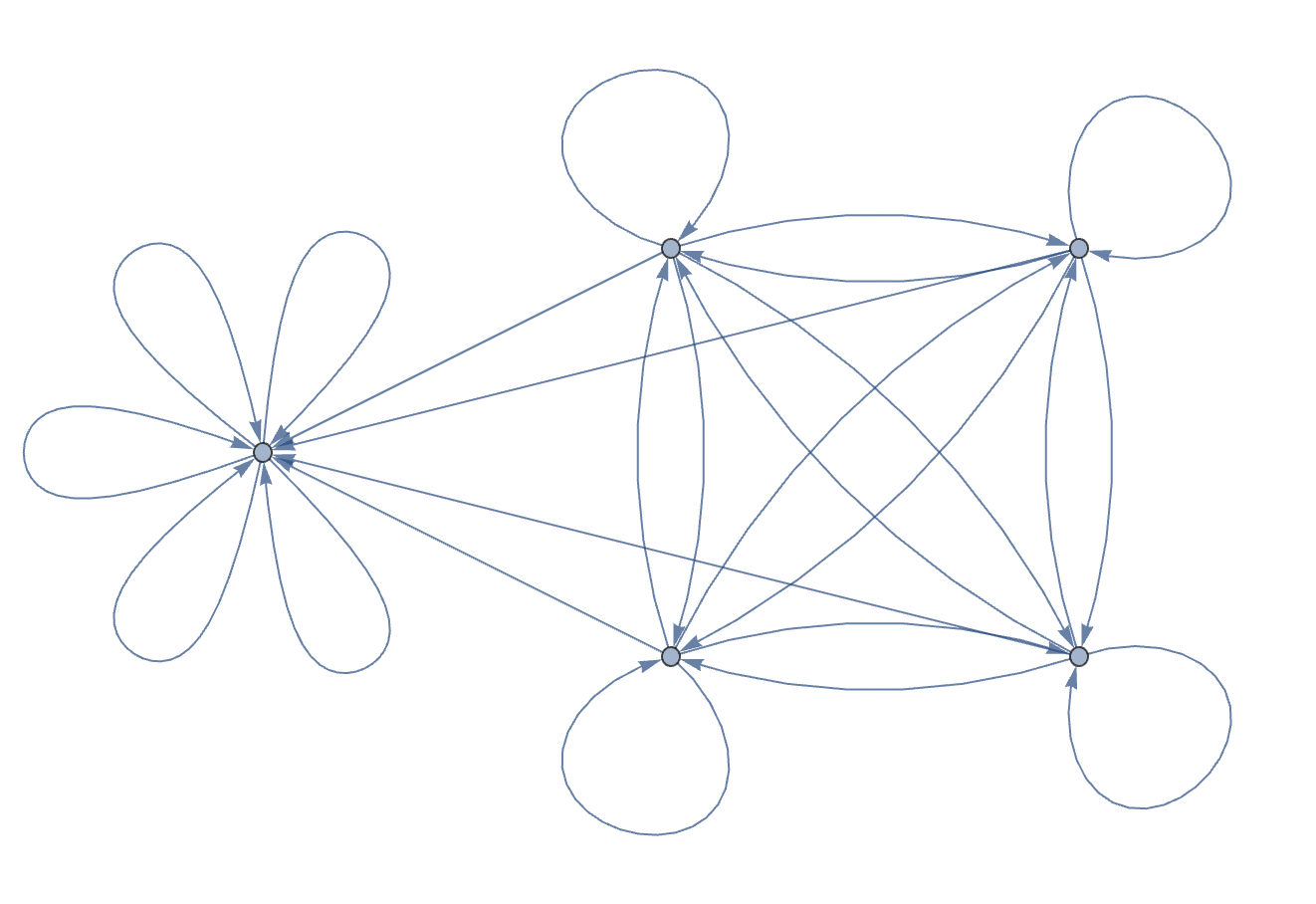}
        \caption{A drawing of $\mathcal Q_{\mathcal Z}(\widetilde{\mathcal T(10,2)})$.}
        \label{fig:quiverex}
    \end{figure}
\end{example}

\bibliography{Ref}
\bibliographystyle{plain}

\end{document}